\newtheorem*{main*}{Main Theorem}
\newtheorem{main}{Theorem}
\newtheorem{maincor}{Corollary}
\newtheorem{theorem}{Theorem}[section]
\newtheorem*{theorem*}{Theorem}
\newtheorem{proposition}[theorem]{Proposition}
\newtheorem{lemma}[theorem]{Lemma}
\newtheorem{corollary}[theorem]{Corollary}
\newtheorem*{question*}{Question}
\newtheorem{conjecture}{Conjecture}
\newtheorem*{conjecture*}{Conjecture}
\newtheorem{notation}[theorem]{Notations}
\newtheorem*{fact*}{Fact}
\theoremstyle{definition}
\newtheorem*{definition*}{Definition}
\theoremstyle{remark}
\newtheorem{remark}[theorem]{Remark}
\numberwithin{equation}{section}
\newcommand{\cout}[1]{}
\definecolor{darkcyan}{rgb}{0. 0.65, 0.65}
\newcommand{\R}{\mathbb{R}}
\DeclareMathOperator{\tr}{tr}
\newcommand{\Ric}{\mathrm{Ric}}
\newcommand{\Pf}{\mathrm{Pf}}
\newcommand{\cF}{\mathcal{F}}
\newcommand{\dvol}{dvol}
\newcommand{\del}{\partial }
\providecommand{\to}{\longrightarrow }
\newcommand{\eq}[1]{\begin{align}\begin{split} #1 \end{split}\end{align}}
\newcommand{\eqn}[1]{\begin{align*}\begin{split} #1 \end{split}\end{align*}}
\newcommand{\matiii}[3]{\left(\begin{array}{ccc} #1\\#2\\#3\end{array}\right)}
\def\be#1\ee{\begin{align}\begin{split} #1 \end{split}\end{align}}
\def\beq#1\eeq{\begin{align*}\begin{split} #1 \end{split}\end{align*}}
\renewcommand{\geq}{\geqslant}
\renewcommand{\leq}{\leqslant}
\begin{document}

\author[Chris Connell]{Chris Connell$^\dagger$}
\thanks{$\dagger$ Supported in part by Simons Foundation grant \#965245.}

\author[Yuping Ruan]{Yuping Ruan}

\author[Shi Wang]{Shi Wang$^\ddagger$}
\thanks{$\ddagger$ Supported in part by the National Natural Science Foundation of China}

\title{Nonpositively curved $4$-manifolds with zero Euler characteristic}

\address{Indiana University}
\email{connell@indiana.edu}

\address{ShanghaiTech University}
\email{wangshi@shanghaitech.edu.cn, shiwang.math@gmail.com}

\address{Northwestern University}
\email{ruanyp@northwestern.edu, ruanyp@umich.edu}
\subjclass[2020]{Primary 53C20; Secondary 53C24, 57R20}

\begin{abstract}
We show that for any closed nonpositively curved Riemannian 4-manifold $M$ with vanishing Euler characteristic, the Ricci curvature must degenerate somewhere. Moreover, for each point $p\in M$, either the Ricci tensor degenerates or else there is a foliation by totally geodesic flat 3-manifolds in a neighborhood of $p$. As a corollary, we show that if in addition the metric is analytic, then the universal cover of $M$ has a nontrivial Euclidean de Rham factor. Finally we discuss how this result creates an implication of conjectures on simplicial volume in dimension four.
\end{abstract}
\maketitle

\thispagestyle{empty}

\section{Introduction}

The well-known Gauss-Bonnet theorem states that the Euler characteristic of a surface (up to a constant multiple $1/2\pi$) equals the integral of the Gaussian curvature. This was generalized by Chern \cite{Che44,Che45} to all even dimensions, which is now known as the Gauss-Bonnet-Chern Theorem and states,
\[\chi(M)=\frac{1}{(2\pi)^n}\int_M \Pf_g(x) dvol_g(x),\]
where $(M,g)$ is a closed oriented Riemannian $2n$-manifold, and $\Pf_g(x)dvol_g$ is the Pfaffian of the curvature form on $M$. One significance is that it closely relates the intrinsic geometry and the underlying topology of a given smooth manifold. An immediate consequence of the formula, as first noticed by John Milnor \cite{Che55}, is that when $n=2$ and $M^4$ admits a nonpositively curved metric, since $\Pf(x)$ does not change sign, the signed Euler characteristic satisfies $(-1)^n\chi(M)\geq 0$, which answers in the affirmative a conjecture of Heinz Hopf \cite{Hopf32} in dimension $4$. However, Geroch \cite{Geroch76} showed that the same strategy cannot be played in dimension $6$ or higher. The conjecture remains open for $n>2$.

The main purpose of this paper is to further analyze in dimension $4$ the equality case $\chi(M)=0$. We give a full geometric description in terms of the local metric structure and the Ricci tensor. In the real analytic case, using a result of Schroeder \cite{Sch89}, we further show that the Riemannian universal cover $\widetilde M$ of $M$ must split off an $\R$-factor. More precisely, we show the following.

\begin{main}\label{main thm}
Let $(M,g)$ be a 4-dimensional closed Riemannian manifold with non-positive curvature and Euler characteristic $\chi(M)=0$. Then the Ricci curvature $\Ric$ must be degenerate somewhere in $M$. Moreover, for any $p\in M$, either of the following holds.
\begin{enumerate}
\item[(a).] $\Ric$ is degenerate at $p$;
\item[(b).] There exists a neighborhood $U_p$ of $p$ such that $U_p$ is isometric to $(U,g_f)$, where $U$ an open subset of $\R^4$,
$$g_f|_{(x_1,x_2,x_3,x_4)}=dx_1^2+dx_2^2+dx_3^2+(f(x_1,x_2,x_3,x_4))^2dx_4^2$$
and $f:U\to\R_+$ is a smooth function such that $f(\cdot,\cdot,\cdot,x_4)$ is strictly convex whenever it is defined. In particular, for any $c\in\R$, $\{(x_1,x_2,x_3,x_4)|x_4=c\}\cap U$ is totally geodesic and flat under the metric $g_f$ whenever it is non-empty.
\end{enumerate}
\end{main}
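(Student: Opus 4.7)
The plan is to convert the global topological hypothesis $\chi(M)=0$ into a pointwise algebraic constraint on the curvature tensor via Gauss--Bonnet--Chern, and then integrate that infinitesimal data into the warped coordinates of alternative (b); the first assertion (that $\Ric$ must degenerate somewhere) then follows from a compactness obstruction to (b) holding globally.

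First, Milnor's observation recalled in the introduction gives $\Pf_g(x)\ge 0$ pointwise on any nonpositively curved $4$-manifold. Combined with Gauss--Bonnet--Chern and $\chi(M)=0$, this forces $\int_M \Pf_g\,dvol_g=0$, hence $\Pf_g\equiv 0$ on $M$, reducing the argument to a pointwise algebraic analysis at each $p\in M$. I would pick an orthonormal basis of $T_pM$ that diagonalizes $\Ric_p$ and use the self-dual/anti-self-dual splitting $\Lambda^2 T_pM=\Lambda^+\oplus\Lambda^-$ to expand $\Pf_g(p)$ in curvature components. In the expansion underlying Milnor's proof of nonnegativity, each summand is either a product of nonpositive sectional curvatures or a square of a mixed curvature component, so $\Pf_g(p)=0$ forces every summand to vanish. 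Tracking these vanishings yields the pointwise dichotomy: either some eigenvalue of $\Ric_p$ is zero (alternative (a)), or there exists a unit vector $v\in T_pM$, unique up to sign, such that $R(X,Y,Z,W)=0$ for all $X,Y,Z,W\in v^\perp$---equivalently, every $2$-plane contained in the $3$-space $v^\perp$ has zero sectional curvature.

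For the local integration, work on the open subset $W\subset M$ where $\Ric$ is nondegenerate. By continuity of the curvature and uniqueness of $v$, the pointwise vectors assemble (after at most a local double cover of $W$) into a smooth unit vector field $V$ with smooth $3$-dimensional orthogonal distribution $V^\perp$. Involutivity of $V^\perp$ comes from applying the second Bianchi identity to the algebraic identity $R(X,Y,Z,W)\equiv 0$ on $V^\perp$; the Gauss and Codazzi equations then identify the integral submanifolds as totally geodesic and intrinsically flat. Standard foliated-geometry constructions---adapting Euclidean coordinates on the leaf through $p$ and propagating along the direction normal to the foliation---produce local coordinates in which the metric takes the asserted form
$$g=dx_1^2+dx_2^2+dx_3^2+f^2(x_1,x_2,x_3,x_4)\,dx_4^2.$$
A direct computation in these coordinates gives
$$R(\partial_{x_i},\partial_{x_4},\partial_{x_4},\partial_{x_j})=-f\,\partial_{x_i}\partial_{x_j}f\qquad(i,j\in\{1,2,3\}),$$
so nonpositive sectional curvature forces $\mathrm{Hess}_{\mathrm{leaf}}(f)\ge 0$, and nondegeneracy of $\Ric$ upgrades this to strict positive-definiteness, giving the required strict convexity of $f(\,\cdot\,,\,\cdot\,,\,\cdot\,,x_4)$. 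Finally, the first assertion---that $\Ric$ degenerates somewhere on $M$---follows by contradiction: if alternative (a) never occurred, then (b) would hold everywhere on the closed manifold $M$, producing a global foliation by totally geodesic flat $3$-leaves with $f$ strictly convex in each leaf direction; a maximum-principle argument on any compact leaf (or a recurrence argument on non-compact leaves accumulating within compact $M$) rules this out.

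The principal obstacle is the pointwise algebraic step: Milnor's classical argument only establishes $\Pf_g\ge 0$, and sharpening it into a usable equality characterization requires isolating each nonnegative summand in the expansion, separately tracking the contributions from $\Ric_p$, from the Weyl tensors $W^\pm$, and from mixed sectional-versus-off-diagonal components, then eliminating intermediate possibilities so that only the two clean alternatives (a) and (b) survive. A secondary difficulty lies in the local step: ensuring smoothness of the distinguished direction $v$ on $W$ and verifying that the second Bianchi identity actually propagates the algebraic vanishing on $v^\perp$ into genuine integrability of $V^\perp$ together with totally-geodesic-flatness of its integral leaves (rather than merely constraining the covariant derivatives of $R$).
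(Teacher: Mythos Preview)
Your outline captures the broad architecture, but there is a genuine gap at the integrability step, and it is precisely where the paper's central idea lives. You assert that the second Bianchi identity, applied to the algebraic vanishing $R(X,Y,Z,W)=0$ on $V^\perp$, yields involutivity of $V^\perp$ together with totally-geodesic flatness of the leaves. In fact Bianchi delivers only half of this. Writing $B(X,Y)=\langle\nabla_X V,Y\rangle$ on $V^\perp$, differentiating the curvature identity and feeding it into the second Bianchi identity shows that $B$ is \emph{skew-symmetric} (its symmetric part---the would-be second fundamental form---vanishes). But involutivity of $V^\perp$ is equivalent to the \emph{skew-symmetric} part of $B$ vanishing, since $\langle[X,Y],V\rangle=-2B^{\mathrm{skew}}(X,Y)$, and Bianchi does not touch that. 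The paper closes this gap by a variational trick that you do not mention: one deforms the metric along $g^{(s)}=g+s\,\rho^2\langle\cdot,V\rangle\langle\cdot,V\rangle$ for a bump function $\rho$, computes the first variation of the Gauss--Bonnet--Chern integrand (using $\Pf_g\equiv 0$ to kill most terms), and shows that the resulting integrand is pointwise nonnegative and vanishes exactly when $B^{\mathrm{skew}}=0$. Since the variation of $\chi(M)$ is zero, this forces $B\equiv 0$. Your closing caveat flags this step as needing verification, but it is not a verification issue: Bianchi alone genuinely does not suffice, and a new mechanism is required.

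There is a secondary gap in your contradiction for ``$\Ric$ must degenerate somewhere.'' The warped coordinates and the strictly convex function $f$ exist only locally, so a maximum-principle argument on a leaf, or a recurrence argument for non-compact leaves, would require globalizing $f$ along possibly dense immersed leaves---this is not automatic and you have not indicated how to do it. The paper instead gives a clean global argument: once $\nabla_X V=0$ for all $X\perp V$ is established, a direct computation yields $\mathrm{div}(\nabla_V V)=\Ric_g(V,V)=R_g/2$; integrating over (a double cover of) $M$ and applying the divergence theorem gives $\int_M R_g\,\dvol_g=0$, hence $R_g\equiv 0$ and $M$ is flat, contradicting the assumed nondegeneracy of $\Ric$.
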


\begin{maincor}\label{main cor}
If $(M,g)$ is a real-analytic 4-dimensional closed Riemannian manifold with non-positive curvature and Euler characteristic $\chi(M)=0$, then $\widetilde M$ has a nontrivial Euclidean de Rham factor.
\end{maincor}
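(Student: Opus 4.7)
The plan is to apply the Main Theorem to produce a local totally geodesic flat codimension-$1$ structure in $(M,g)$, extend it to a complete flat $\R^{3}$ in $\widetilde M$ via real analyticity, and then invoke Schroeder's rigidity theorem \cite{Sch89} to extract the Euclidean de Rham factor.

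I would first consider the open set $V\subset M$ on which the Ricci tensor is non-degenerate. The first assertion of the Main Theorem guarantees $V\neq M$. In the generic case $V\neq\emptyset$, pick $p\in V$; then part~(b) of the Main Theorem produces a neighborhood of $p$ foliated by totally geodesic flat $3$-manifolds, giving in particular a flat totally geodesic $3$-disk $L$ through $p$. Lifting $L$ to a piece $\widetilde L_{0}\subset\widetilde M$ and using that the conditions ``totally geodesic'' and ``vanishing sectional curvature on all tangent $2$-planes'' are analytic constraints under the analyticity of $g$, I would propagate $\widetilde L_{0}$ via analytic continuation along the geodesics tangent to it. Combined with the Cartan--Hadamard diffeomorphism $\exp_{q}\colon T_{q}\widetilde M\to\widetilde M$ and the preservation of the two conditions under analytic continuation, this yields a complete, simply connected, totally geodesic, flat $3$-submanifold of $\widetilde M$, necessarily isometric to $\R^{3}$. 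Schroeder's theorem \cite{Sch89}, applied to this codimension-$1$ totally geodesic Euclidean subspace, then splits $\widetilde M$ isometrically as $\R\times N$, producing the desired Euclidean factor.

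In the remaining case $V=\emptyset$, the Ricci tensor degenerates everywhere; since nonpositive curvature forces $\Ric(v,v)=0$ to imply $K(v,w)=0$ for every $w$, the Ricci nullity coincides with the full sectional-curvature nullity, giving a nowhere-zero integrable distribution whose leaves are totally geodesic and flat on the open dense locus of locally constant rank. Real analyticity globalizes this structure, and standard nullity rigidity in analytic nonpositive curvature then produces a parallel line field on $\widetilde M$, delivering the $\R$-factor via the de Rham decomposition theorem.

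The principal obstacle is the analytic extension step in the first case: I must verify that the totally geodesic flat $3$-disk, originally provided only on the lift of $V$, extends analytically to a complete totally geodesic flat $\R^{3}$ in $\widetilde M$ without losing flatness or total geodesicity as it crosses the lift of the analytic locus $M\setminus V$, where the warped-product model of the Main Theorem is no longer asserted. Once this analytic extension is in place, Schroeder's theorem applies directly and finishes the proof.
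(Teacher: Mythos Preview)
Your plan misreads what Schroeder's result \cite{Sch89} actually provides. Theorem~2 there is a \emph{classification} of maximal higher-rank submanifolds in analytic nonpositively curved $4$-manifolds; it does not assert that the existence of a single totally geodesic flat $\R^{3}$ forces an isometric splitting $\widetilde M\cong\R\times N$. The paper's argument proceeds quite differently: assuming no Euclidean factor, one invokes \cite[Corollary~2]{GZ11} to find a point of negative-definite Ricci, then uses the Main Theorem to obtain not one but \emph{uncountably many} flat $3$-spaces in $\widetilde M$ (coming from the foliation in part~(b)). Schroeder's classification then rules out rank one, after which one applies the rank rigidity theorem of Ballmann and Burns--Spatzier to obtain a nontrivial product $\widetilde M=M_{1}\times M_{2}$. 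A final step is still needed: one must check that this product has a Euclidean factor, which is done by observing that a $2\times 2$ product with both factors negatively curved at some point admits no flat totally geodesic $3$-plane through that point, contradicting the Main Theorem. Your proposal collapses these three distinct ingredients (Schroeder, rank rigidity, product analysis) into a single appeal to Schroeder, which is not justified.

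Your treatment of the case $V=\emptyset$ is also a gap: ``standard nullity rigidity'' is exactly the nontrivial content of \cite[Corollary~2]{GZ11}, and without that citation the claim that everywhere-degenerate Ricci in analytic nonpositive curvature yields a Euclidean de Rham factor is unsupported. In the paper's proof this case never arises separately, since the contrapositive via \cite{GZ11} handles it automatically.
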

\begin{proof}[Proof of Corollary \ref{main cor} from Theorem \ref{main thm}] If not, by \cite[Corollary 2]{GZ11}, the Ricci curvature of $(M,g)$ must be negative definite somewhere. By the real analytic assumption and Theorem \ref{main thm}, there exist uncountably many totally geodesic flats of dimension three in the universal cover $(\widetilde M, g)$ of $(M,g)$. By Schroeder's classification result \cite[Theorem 2]{Sch89} of maximal higher rank submanifolds in analytic 4-manifolds of nonpositive curvature, $(\widetilde M,g)$ cannot be rank one and therefore $\mathrm{rank}(\widetilde M,g)\geq 2$. It follows from the celebrated rank rigidity theorem (proved by Ballmann in \cite{Bal85} and Burns-Spatzier in \cite{BS87a} and \cite{BS87b} independently and with different methods) that $(\widetilde M,g)$ is a Riemannian product of two nonpositively curved Riemannian manifolds. Let $(\widetilde M,g)=(M_1,g_1)\times (M_2,g_2)$ with $(M_j,g_j)$ nonpositively curved. Since there exist some $p=(p_1,p_2)$ such that the Ricci curvature $\Ric_g|_p$ is negative definite, $\dim(M_1)=\dim(M_2)=2$ and the sectional curvature $K_{(M_j,g_j)}(p_j)$ of $(M_j,g_j)$ at $p_j$ is negative for any $j\in\{1,2\}$. In particular, there does not exist a totally geodesic and flat submanifold of dimension three passing through $p$. This contradicts Theorem \ref{main thm}.
\end{proof}

\textbf{Plan of the proof:} The proof of Theorem \ref{main thm} is based on the following three observations:
\begin{enumerate}
\item[(1).] If the Ricci curvature is nondegenerate (equivalently, negative definite) at a point $p$, then there exist a unique one dimensional subspace $\R v\subset T_pM$ for some unit vector $v\in T_pM$ such that for any $w,u\in T_pM$ with $w,u\perp v$, we have $R(w,u,w,u)=0$. See Lemma \ref{prelim local structure} in Section \ref{s2};
\item[(2).] One can use the Gauss-Bonnet-Chern formula \eqref{GBC formula} to compute the Euler characteristic $\chi(M)=\chi(M,g)$ of $(M,g)$, which is independent of the choice of the Riemannian metric $g$. Since $\chi(M)=0$ and $M$ is nonpositively curved, the integrand $\mathrm{Pf}_g$ in the Gauss-Bonnet-Chern formula \eqref{GBC formula} vanishes everywhere. Use the aforementioned one dimensional subspace, we construct a smooth path of Riemannian metrics $g^{(s)}=g+sq$ for some symmetric $2$-tensor supported near $p$. Then we compute the first variation of $\chi(M)$ (as a constant function on the space of Riemannian metric) along $g^{(s)}$ when $s=0$ via the Gauss-Bonnet-Chern formula \eqref{GBC formula}, which equals $0$. One can see that $\mathrm{Pf}_g\equiv 0$ greatly reduces the complexity of this computation. Based on this first variation computation, we show that the orthogonal complement of the aforementioned one dimensional subspaces form a smooth integrable distribution near $p$. Moreover, the integral manifolds of this distribution are totally geodesic and flat. See Lemma \ref{integrable+totally geo flats} in Section \ref{s3};
\item[(3).] If the Ricci curvature is non-degenerate everywhere, the aforementioned one dimensional subspaces are defined everywhere and hence form a smooth real line bundle. Up to a two-fold cover, one can find a unit vector field $V$ as a section of this bundle. Then we can show that $\mathrm{div}\nabla_VV=\Ric_g(V,V)$. It follows from the divergence theorem that $\Ric_g(V,V)\equiv 0$, forcing the Riemannian metric $g$ to be flat. This contradicts the assumption that Ricci curvature is non-degenerate everywhere. See Section \ref{s4} for more details.
\end{enumerate}
In Section \ref{s2}, we also discuss some standard curvature formulas and computations. The proof of Theorem \ref{main thm} is given in Section \ref{s4}. Some applications of our result to simplicial volume can be found in Section \ref{s5}.

\subsection*{Acknowledgments} We would like to thank Livio Flaminio for a careful conversation on curvature computations. We also thank Ben Schmidt and Ralf Spatzier for helpful discussions. 

\section{Preliminaries}\label{s2}
\begin{notation}
Let $(M,g)$ be a Riemannian manifold with Levi-Civita connection $\nabla$, {which is determined by the following Koszul formula:
\begin{align}\label{connection formula}
2\langle \nabla_XY,Z\rangle=X\langle Y,Z\rangle+Y\langle X,Z\rangle-Z\langle X,Y\rangle+\langle[X,Y],Z\rangle-\langle[X,Z],Y\rangle-\langle[Y,Z],X\rangle.
\end{align}
for any vector fields $X,Y,Z$ defined on some open subset of $(M,g)$.} Throughout this paper, we adopt the following convention (e.g. \cite{DoCarmo92} p.89-91) on Riemannian curvature tensor: For any vector fields $X,Y,Z,W$, we define the curvature tensor as:
$$R(X,Y)Z:=\nabla_Y\nabla_XZ-\nabla_X\nabla_YZ+\nabla_{[X,Y]}Z\mathrm{~and~}R(X,Y,Z,W)=g(R(X,Y)Z,W).$$
{For any point $p\in M$ and any vectors $X,Y,Z,W,U\in T_pM$}, the Riemannian curvature tensor has the following properties.
\begin{enumerate}
\item[(a).] Skew symmetries and symmetries:
\eq{\label{curv sym}R(X,Y,Z,W)=-R(Y,X,Z,W)=-R(X,Y,W,Z)=R(Z,W,X,Y);}
\item[(b).] The first Bianchi identity:
\eq{\label{1st Bianchi} R(X,Y,Z,W)+R(Y,Z,X,W)+R(Z,X,Y,W)=0;}
\item[(c).] The second Bianchi identity:
\eq{\label{2nd Bianchi} \nabla R(X,Y,Z,W;U)+\nabla R(X,Y,W,U;Z)+\nabla R(X,Y,U,Z;W)=0.}
\end{enumerate}
\end{notation}
\subsection{The Gauss-Bonnet-Chern formula in dimension 4}\label{ss21}
Denoted by $|\cdot|_g$ the norm of a tensor induced by the Riemannian metric $g$. Recall that we have the following Gauss-Bonnet-Chern formula in dimension 4.
\begin{theorem}[{The Gauss-Bonnet-Chern formula in dimension 4, \cite{Che44}}]
Let $(M,g)$ be a closed oriented Riemannian $4$-manifold. The \emph{Pfaffian} function of $(M,g)$ is given by $\Pf_g:=(|R|_g^2-4|\Ric|_g^2+R_g^2)/8$, where $R_g=\tr_g\Ric$ is the scalar curvature function of $(M,g)$. Denoted by $\chi(M)$ the Euler charateristic of $M$. Then we have 
\eq{\label{GBC formula}
\chi(M)=\frac{1}{(2\pi)^2}\int_M\Pf_g(p)\dvol_g(p)=\frac{1}{32\pi^2}\int_M(|R|_g^2-4|\Ric|_g^2+R_g^2)(p)\dvol_g(p).
}
\end{theorem}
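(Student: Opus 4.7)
The plan is to invoke Chern's general formula for the Euler class and then carry out the purely algebraic reduction in dimension four. First I would quote Chern's theorem \cite{Che44}: for any closed oriented Riemannian $2n$-manifold, $\chi(M) = (2\pi)^{-n}\int_M \Pf(\Omega)$, where $\Omega = (\Omega_{ij})$ is the skew-symmetric matrix of curvature two-forms in a local orthonormal frame $e_1,\ldots,e_{2n}$, and for a skew $2n\times 2n$ matrix $A$ the Pfaffian is
$$\Pf(A) = \frac{1}{2^n n!}\sum_{\sigma\in S_{2n}}\mathrm{sgn}(\sigma)\, A_{\sigma(1)\sigma(2)}\cdots A_{\sigma(2n-1)\sigma(2n)},$$
interpreted with wedge products when applied to $\Omega$. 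Specializing to $n=2$ yields
$$\Pf(\Omega) \;=\; \tfrac{1}{8}\,\epsilon^{ijkl}\,\Omega_{ij}\wedge\Omega_{kl},$$
so that all that remains is to convert this top-degree form into the scalar combination appearing in the theorem.

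Next I would expand $\Omega$ in terms of the Riemann tensor. Writing $\Omega_{ij} = \tfrac{1}{2}R_{ijkl}\,\theta^k\wedge\theta^l$ in the dual orthonormal coframe $\{\theta^i\}$ and using $\theta^a\wedge\theta^b\wedge\theta^c\wedge\theta^d = \epsilon^{abcd}\,\dvol_g$, one computes
$$\Pf(\Omega) \;=\; \frac{1}{32}\,\epsilon^{ijkl}\,\epsilon^{abcd}\,R_{ijab}\,R_{klcd}\,\dvol_g.$$
The product $\epsilon^{ijkl}\epsilon^{abcd}$ expands as a determinant of Kronecker deltas, producing a signed sum of $4!=24$ monomials of the form $\delta^i_{\pi(a)}\delta^j_{\pi(b)}\delta^k_{\pi(c)}\delta^l_{\pi(d)}$ as $\pi$ ranges over the permutations of $\{a,b,c,d\}$. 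After substitution, the pair symmetries \eqref{curv sym} and the first Bianchi identity \eqref{1st Bianchi} allow each of the 24 terms to be rewritten as one of three canonical contractions of two copies of $R$: the full contraction $R_{ijkl}R^{ijkl}=|R|_g^2$, a once-traced contraction yielding $R_{ij}R^{ij}=|\Ric|_g^2$, or the doubly-traced configuration $R_g^2$. Collecting with the correct multiplicities and signs produces the algebraic identity
$$\epsilon^{ijkl}\epsilon^{abcd}\,R_{ijab}R_{klcd} \;=\; 4\bigl(|R|_g^2 - 4|\Ric|_g^2 + R_g^2\bigr),$$
and assembling the prefactors $(2\pi)^{-2}\cdot\tfrac{1}{32}\cdot 4 = (32\pi^2)^{-1}$ yields the claimed formula.

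I expect the main obstacle to be the combinatorial bookkeeping in this $\epsilon$-$\epsilon$ expansion: one must simultaneously apply both skew-symmetries in \eqref{curv sym}, the pair-swap symmetry $R(X,Y,Z,W)=R(Z,W,X,Y)$, and the first Bianchi identity \eqref{1st Bianchi} to group the 24 signed monomials into the three canonical pieces, taking care to recover the precise coefficients $+1,-4,+1$. Since this step is classical and not sensitive to the nonpositive-curvature hypothesis used elsewhere in the paper, I would keep the derivation terse and rely on the symmetries catalogued in the Notations above rather than grinding through all $24$ monomials explicitly.
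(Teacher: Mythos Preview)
Your outline is a correct sketch of the standard derivation of the four-dimensional Gauss--Bonnet--Chern integrand from Chern's general formula, and the combinatorial identity $\epsilon^{ijkl}\epsilon^{abcd}R_{ijab}R_{klcd}=4(|R|_g^2-4|\Ric|_g^2+R_g^2)$ is exactly the algebraic content needed. However, the paper does not prove this theorem at all: it is stated with a citation to \cite{Che44} and treated as background input. Immediately after the statement the paper simply expands $\Pf_g(p)$ in a fixed orthonormal basis to obtain the explicit expression \eqref{Pf general}, and then specializes to a basis satisfying \eqref{ON basis cond} to get the simplified form \eqref{Pf simplified}; neither of these is offered as a proof of the Gauss--Bonnet--Chern formula itself. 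So your proposal supplies an argument where the paper deliberately omits one, which is fine but goes beyond what the paper does.
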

Fix any point $p\in M$ and any orthonormal basis $\{v_1,v_2,v_3,v_4\}$ in $T_pM$. Denoted by $R_{ijkl}=R(v_i,v_j,v_k,v_l)$ for simplicity, where $1\leq i,j,k,l\leq 4$. Then we have
\eq{\label{Pf general}
\Pf_g(p)=&R_{1212}R_{3434}+R_{1213}R_{3442}+R_{1214}R_{3423}+R_{1223}R_{3414}+R_{1224}R_{3431}+R_{1234}R_{3412} \\
&+R_{1312}R_{4234}+R_{1313}R_{4242}+R_{1314}R_{4223}+R_{1323}R_{4214}+R_{1324}R_{4231}+R_{1334}R_{4212} \\
&+R_{1412}R_{2334}+R_{1413}R_{2342}+R_{1414}R_{2323}+R_{1423}R_{2314}+R_{1424}R_{2331}+R_{1434}R_{2312}\\
=&R_{1212}R_{3434}+R_{1313}R_{4242}+R_{1414}R_{2323}+R_{1234}^2+R_{1342}^2+R_{1423}^2 \\
&-2R_{1213}R_{4243}-2R_{1214}R_{3234}-2R_{1314}R_{2324}-2R_{2123}R_{4143}-2R_{2124}R_{3134}-2R_{3132}R_{4142}.
}
In particular, if we further assume that $v_1,v_2,$ and $v_3$ are chosen such that
\eq{\label{ON basis cond}
R(v_1,v_2,v_1,v_2)=\min_{v\perp w, v,w\in T^1_pM}R(v,w,v,w)~\mathrm{and}~R(v_1,v_3,v_1,v_3)=\min_{\begin{subarray}~v\in\R v_1\oplus\R v_2,\\ w\perp\R v_1\oplus\R v_2,\\ v,w\in T^1_pM\end{subarray}}R(v,w,v,w),
}
then $R_{1213}=R_{1214}=R_{2123}=R_{2124}=R_{3132}=R_{1314}=0$. Therefore the Pfaffian function at $p$ can be simplified as (see also \cite[Page 125]{Che55})
\eq{\label{Pf simplified}
\Pf_g(p)=R_{1212}R_{3434}+R_{1313}R_{4242}+R_{1414}R_{2323}+R_{1234}^2+R_{1342}^2+R_{1423}^2.
}
For non-positively curved Riemannian manifolds with zero Euler characteristic, we have the following lemma.
\begin{lemma}\label{prelim local structure}
Let $(M,g)$ be a non-positively curved closed $4$-manifold with Euler characteristic $\chi(M)=0$. For any $p\in M$ and any orthonormal frame $v_1,v_2,v_3,v_4\in T_pM$ satisfying condition \eqref{ON basis cond}, either of the following holds.
\begin{enumerate}
\item[(a).] $\Ric$ is degenerate at $p$;
\item[(b).] There exist an open neighborhood $U_p$ of $p$ and a smooth unit vector field $V$ defined on $U_p$ satisfying the following properties:
\begin{enumerate}
\item[(i).] $\Ric$ is negative definite on $U_p$;
\item[(ii).] $V$ is a simple eigenvector of $\Ric$ with the smallest eigenvalue $R_g/2$;
\item[(iii).] $V|_p=v_1$.
\end{enumerate}
\end{enumerate}
\end{lemma}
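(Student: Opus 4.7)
The plan is to combine Milnor's observation that the Pfaffian integrand does not change sign under non-positive curvature with the elementary fact that a linear function of $t\in\mathbb{R}$ which stays $\leq 0$ everywhere must be constant. Since $\chi(M)=0$ and, in any frame satisfying \eqref{ON basis cond}, the simplified Pfaffian \eqref{Pf simplified} expresses $\Pf_g$ at each point as a sum of manifestly non-negative terms (three products of non-positive sectional curvatures plus three squares), the Gauss--Bonnet--Chern formula \eqref{GBC formula} forces $\Pf_g \equiv 0$ pointwise on $M$. Evaluating in the chosen frame at $p$ immediately yields
\[
R_{1212}R_{3434} = R_{1313}R_{4242} = R_{1414}R_{2323} = 0, \qquad R_{1234}=R_{1342}=R_{1423}=0.
\]

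If alternative (a) fails, then by non-positivity $\Ric$ is negative definite at $p$. A short sign cascade -- using the minimality of $R_{1212}$ among all sectional curvatures and of $R_{1313}$ within the family constrained by \eqref{ON basis cond}, together with strict negativity of each $\Ric(v_i,v_i)$ -- pins down $R_{1212}, R_{1313}, R_{1414} < 0$ and $R_{2323}=R_{2424}=R_{3434}=0$. In particular $\Ric(v_1,v_1) = R_{1212}+R_{1313}+R_{1414} = R_g/2$.

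The main obstacle is that the minimization conditions do not a priori force the mixed Riemann components $R_{1424}, R_{1434}, R_{3134}$ to vanish, yet these are precisely what prevent $v_1$ from being a Ricci eigenvector. My plan here is to exploit non-positive sectional curvature along 2-plane families of the form $\mathrm{span}(v_1+tv_j, v_k)$ with $j\neq k$ in $\{2,3,4\}$: in each such family the sectional curvature numerator equals $R_{1k1k} + 2t R_{1kjk} + t^2 R_{jkjk}$, and the $t^2$-coefficient is one of the already-vanishing $R_{2323}, R_{2424}, R_{3434}$. The resulting linear function of $t$ must remain $\leq 0$ for all $t\in\mathbb{R}$, so its slope vanishes. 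Concretely, $\mathrm{span}(v_1+tv_3, v_4)$ gives $R_{1414}+2tR_{1434}\leq 0$, hence $R_{1434}=0$; similarly $\mathrm{span}(v_1+tv_2, v_4)$ kills $R_{1424}$, and $\mathrm{span}(v_1+tv_4, v_3)$ kills $R_{1343}$, which via first Bianchi forces $R_{3134}=0$. Parallel families inside $\mathrm{span}(v_2,v_3,v_4)$, e.g.\ $\mathrm{span}(v_2, v_3+tv_4)$, simultaneously yield $R_{2324}=R_{2434}=R_{3234}=0$. Substituting these vanishings into $\Ric(v_i,v_j)=\sum_k R(v_i,v_k,v_j,v_k)$ and reducing remaining three-index components via first Bianchi then shows that $\Ric$ is diagonal in the basis $\{v_1,\ldots,v_4\}$, with eigenvalues $R_g/2,\, R_{1212},\, R_{1313},\, R_{1414}$. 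Since $R_g/2$ equals the sum of the other three, each strictly negative, it is strictly less than every other eigenvalue, hence is the unique smallest eigenvalue and is simple.

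Finally, both negative definiteness of $\Ric$ and simplicity of its smallest eigenvalue are open conditions, so both persist on an open neighborhood $U_p$ of $p$; on $U_p$ the standard implicit-function/perturbation theory for simple eigenvalues of smooth symmetric tensors provides a smooth unit eigenvector field, which I normalize so that its value at $p$ is $v_1$. This yields the desired $V$.
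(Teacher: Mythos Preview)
Your argument is correct and follows essentially the same route as the paper: both extract $R_{1i1i}<0$ and $R_{jkjk}=0$ (for $j,k\in\{2,3,4\}$) from $\Pf_g\equiv 0$ and the minimality conditions, then kill the mixed components $R_{1kjk}$ and $R_{jklk}$ by exploiting the fact that the zero-curvature $2$-planes are maximal---you phrase this as ``a linear function that stays $\le 0$ on all of $\mathbb{R}$ has zero slope,'' while the paper phrases it as ``$0$ is the largest sectional curvature, so the first variation vanishes,'' but these are the same computation. One small remark: you invoke the first Bianchi identity to pass from $R_{1343}=0$ to $R_{3134}=0$, but the curvature symmetries alone already give $R_{3134}=R_{1343}$, so Bianchi is not actually needed there (nor for the other ``three-index'' reductions you mention).
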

\begin{proof}
Since $(M,g)$ is non-positively curved, \eqref{Pf simplified} implies that the Pfaffian function $\Pf_g\geq 0$. By the Gauss-Bonnet-Chern formula \eqref{GBC formula}, we have $\Pf_g\equiv0$. Denoted by $R_{ijkl}=R(v_i,v_j,v_k,v_l)$ for simplicity, where $1\leq i,j,k,l\leq 4$. By \eqref{Pf simplified} we have $R_{1212}R_{3434}=R_{1313}R_{4242}=R_{1414}R_{2323}=0$. 

If $R_{1212}=0$, by the first assumption in \eqref{ON basis cond}, we have $R_{1313}=R_{1414}=0$, which implies that $\Ric_g(v_1,v_1)=0$. Hence $\Ric$ is degenerate at $p$. 

If $R_{1212}<0$, then $R_{3434}=0$. By the second assumption in \eqref{ON basis cond}, $0\geq R_{4242}\geq R_{1313}$, and $0\geq R_{1414}\geq R_{1313}$. Therefore $R_{4242}=0$ and $R_{1414}R_{2323}=0$. If $R_{1414}=0$, then $\Ric_g(v_4,v_4)=0$, which implies that $\Ric$ is degenerate at $p$. 

Now we assume that $\Ric$ is negative definite at $p$. By the previous discussions, we have $0>R_{1414}\geq R_{1313}\geq R_{1212}$. Hence, $R_{2323}=R_{2424}=R_{3434}=0$. Therefore $R_g/2=R_{1212}+R_{1313}+R_{1414}+R_{2323}+R_{2424}+R_{3434}=\Ric_g(v_1,v_1)$. Hence it remains to show that $v_1$ is a simple eigenvector of $\Ric$ with the smallest eigenvalue. Observing that $0$ is the largest possible sectional curvature, we also have $R_{2324}=R_{3234}=R_{4243}=0$ and  
$R_{1424}=R_{1434}=R_{1343}=0$. Recall from condition \eqref{ON basis cond}, equation \eqref{Pf simplified} and the fact that $\Pf_g(p)=0$, we already have
$$R_{1213}=R_{1214}=R_{2123}=R_{2124}=R_{3132}=R_{1314}=R_{1234}=R_{1342}=R_{1423}=0.$$
Therefore for any unit vector $v=\sum_{i=1}^4a_iv_i\in T_pM$, we have 
$$\Ric_g(v,v)=a_1^2(R_{1212}+R_{1313}+R_{1414})+\sum_{i=2}^4a_i^2R_{i1i1}\geq (R_{1212}+R_{1313}+R_{1414})=\Ric_g(v_1,v_1)=\frac{R_g}{2}.$$ 
Equality holds only when $|a_1|=1$ and $a_2=a_3=a_4=0$. Hence $v_1$ is a simple eigenvector of $\Ric$ with the smallest eigenvalue $R_g/2$. Since $U:=\{q\in M: \Ric \mathrm{~is~negative~definite}\}=\{q\in M: \det_g(\Ric)\neq 0\}$ is open, the smallest eigenvalue of $\Ric$ is simple and equal to $R_g/2$ in a neighborhood of $p$. Then the existence of an open neighborhood of $U_p$ satisfying property (i) and a smooth unit vector field $V$ satisfying properties (ii) and (iii) follows directly from the Implicit Function Theorem.
\end{proof}

We will also need to use the following lemma later.

\begin{lemma}\label{lem:zeroRic_implies_zeroPf}
	Let $(M,g)$ be a non-positively curved, closed $4$-manifold. If $\Ric$ is degenerate at $p$, then $\Pf_g(p)=0$.
\end{lemma}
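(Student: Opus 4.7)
The plan is to show that at any point $p$ where $\Ric$ is degenerate, a unit vector $w\in T_pM$ with $\Ric(w,w)=0$ (which exists since nonpositive curvature forces $\Ric$ to be negative semi-definite) must satisfy $R(w, X, Y, Z) = 0$ for all $X, Y, Z \in T_pM$. Once this is established, completing $w$ to an orthonormal basis $\{v_1=w, v_2, v_3, v_4\}$ of $T_pM$ and inspecting formula \eqref{Pf general} gives the conclusion: every term there has its first curvature factor of the form $R_{1ijk} = R(w, v_i, v_j, v_k)$, which vanishes, so $\Pf_g(p)=0$.

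The vanishing $R(w,\cdot,\cdot,\cdot)=0$ would proceed in two stages. \emph{Stage 1.} I would first show $R(w, X, w, Y) = 0$ for all $X, Y \in T_pM$. Writing $\Ric(w,w) = \sum_k R(w, e_k, w, e_k)$ for an orthonormal basis $\{e_k\}$ of $w^\perp$, each nonpositive term in the sum must vanish; thus the negative semi-definite quadratic form $X \mapsto R(w, X, w, X)$ has zero diagonal in an orthonormal basis, forcing it to vanish identically, and polarization yields the claim. \emph{Stage 2.} For orthonormal $u, u' \in w^\perp$ and real scalars $a, b$ not both zero, the nonpositivity of the sectional curvature of the plane spanned by $aw+bu$ and $u'$ combines with Stage 1 to give
\[ R(aw+bu, u', aw+bu, u') = 2ab\, R(w, u', u, u') + b^2\, R(u, u', u, u') \leq 0. \]
For fixed $b \neq 0$ the right side is linear in $a$ yet bounded above, so $R(w, u', u, u') = 0$; bilinearity extends this to all $u, u' \in w^\perp$. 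Polarizing in $u'$ shows the bilinear form $(u_1', u_2') \mapsto R(w, u_1', u, u_2')$ is antisymmetric. Applying the first Bianchi identity \eqref{1st Bianchi} to $R(w, u_1', u, u_2')$ and simplifying using \eqref{curv sym} together with this antisymmetry yields $2 R(w, u_1', u, u_2') = R(w, u, u_1', u_2')$; swapping $u_1' \leftrightarrow u$ gives the reversed identity, and together these force $R(w, u_1', u, u_2') = 0$. Combined with Stage 1, this proves $R(w, \cdot, \cdot, \cdot) = 0$.

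The main obstacle is Stage 2, where one must extract the vanishing $R(w, u', u, u') = 0$ from the sectional-curvature inequality and then carefully coordinate the first Bianchi identity with the algebraic symmetries of $R$ to bootstrap from the 2-tensor vanishing $R(w, \cdot, w, \cdot) = 0$ to the 3-tensor vanishing $R(w, \cdot, \cdot, \cdot) = 0$. Once the latter is in hand, the conclusion $\Pf_g(p) = 0$ follows by direct inspection of \eqref{Pf general}.
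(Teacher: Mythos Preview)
Your proof is correct and follows essentially the same route as the paper: both arguments show that the null direction $w$ of $\Ric$ satisfies $R(w,\cdot,\cdot,\cdot)=0$ by first using nonpositivity and $\Ric(w,w)=0$ to kill $R(w,\cdot,w,\cdot)$, then a first-variation argument to kill $R(w,u',u,u')$, and finally the first Bianchi identity together with the curvature symmetries to eliminate the remaining components of type $R(w,u_1',u,u_2')$, after which $\Pf_g(p)=0$ follows from \eqref{Pf general}. The only difference is presentational---the paper works component-by-component in a fixed orthonormal basis and differentiates along $u_t=\cos t\,v_3+\sin t\,v_4$, whereas you phrase the same variational step coordinate-free via the linear-in-$a$ inequality $2ab\,R(w,u',u,u')+b^2R(u,u',u,u')\le 0$; your Bianchi-plus-swap step (regardless of which side the factor $2$ lands on) is exactly the paper's observation that $R_{1432}=R_{3142}=R_{4312}$ combined with $R_{1432}+R_{3142}+R_{4312}=0$.
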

\begin{proof}
	We choose an orthonormal basis $\{v_1,v_2,v_3,v_4\}$ at $p$ such that $\Ric_g(v_1,v_1)=0$. We adopt a similar notation and write $R_{ijkl}=R(v_i,v_j,v_k,v_l)$ for $1\leq i,j,k,l\leq 4$. By the non-positive curvature assumption, we have $R(v_1,u,v_1,u)=0$ for any $u\perp v_1$. Since $R(v_1,u,v_1,u)=0$ achieves the maximal curvature, we have $R_{1213}=R_{1214}=R_{1314}=R_{2123}=R_{2124}=R_{3132}=R_{3134}=R_{4142}=R_{4143}=0$. In particular, $R(v_1,u,v_2,u)=0$ for any $u$ in the span of $\{v_3, v_4\}$. Now take $u_t=\cos t \cdot v_3+\sin t\cdot v_4$ and use
	\[\left.\frac{d}{dt}\right\vert_{t=0}R(v_1,u_t,v_2,u_t)=0,\]
	we obtain that $R_{1324}+R_{1423}=0$. Now switching the role of $v_2, v_3$ and run the same argument, we obtain that $R_{1234}+R_{1432}=0$. It follows that $R_{1432}=-R_{1423}=R_{1324}=R_{3142}$ and $R_{1432}=-R_{1234}=R_{4312}$. Finally use the first Bianchi identity that $R_{1432}+R_{3142}+R_{4312}=0$, we see that $R_{1432}=R_{3142}=R_{4312}=0$. Thus, in view of equation \eqref{Pf general}, we have that $\Pf(p)=0$.
\end{proof}
\subsection{Riemannian curvature tensors under metric variations}\label{ss22}
Let $q(\cdot,\cdot)$ be a symmetric $2$-tensor on $(M,g)$. Let $g^{(s)}=g+sq$ be a smooth family of Riemannian metrics defined for $|s|\ll 1$. Denoted by $R^{(s)}$ the corresponding Levi-Civita connection and Riemannian curvature tensor for $g^{(s)}$. Then we have the following first variation formula for $R^{(s)}$. For simplicity, we also use the notations $\langle\cdot,\cdot\rangle$ for $g(\cdot,\cdot)=g^{(0)}(\cdot,\cdot)$ and $R$ for the Riemannian curvature tensor of $g$.
\begin{proposition}[{\cite[1.174 Theorem, page 62]{Bes07}}]\label{curvature 1st var}
Under the above assumptions, for any vector fields $X,Y,Z,W$ on $M$, we have
\begin{align*}
&\left\langle\left.\frac{d}{ds}\right|_{s=0}R^{(s)}(X,Y)Z,W\right\rangle \\
=&\frac{1}{2}[\nabla^2q(X,W;Z;Y)-\nabla^2q(X,Z;W;Y) 
-\nabla^2q(Y,W;Z;X)+\nabla^2q(Y,Z;W;X)] \\
&-\frac{1}{2}[q(R(X,Y)Z,W)+q(Z,R(X,Y)W)].
\end{align*}
and
\eq{\label{(0,4)-curv var}
&\left.\frac{d}{ds}\right|_{s=0}R^{(s)}(X,Y,Z,W) \\
=&\frac{1}{2}[\nabla^2q(X,W;Z;Y)-\nabla^2q(X,Z;W;Y) 
-\nabla^2q(Y,W;Z;X)+\nabla^2q(Y,Z;W;X)] \\
&+\frac{1}{2}[q(R(X,Y)Z,W)-q(Z,R(X,Y)W)].
}
\end{proposition}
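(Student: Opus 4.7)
The plan is to carry out the computation in three stages: first compute the first variation of the Levi-Civita connection (which is automatically tensorial), then substitute into the definition of the Riemann tensor to obtain the $(1,3)$-variation, and finally pass to the $(0,4)$-version by accounting for the simultaneous variation of the metric used to lower an index.

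\emph{Stage 1: varying the connection.} Since the difference of any two connections is tensorial, $A := \left.\frac{d}{ds}\right|_{s=0}\nabla^{(s)}$ is a well-defined $(1,2)$-tensor on $M$. To determine it explicitly, I would differentiate the Koszul formula \eqref{connection formula} applied to $g^{(s)}$ at $s=0$. After expanding each $Xg^{(s)}(Y,Z)$ via the derivation property and rewriting brackets using $[X,Y] = \nabla_X Y - \nabla_Y X$, nearly every appearance of $q$ itself cancels, leaving the classical formula
\[
2g(A(X,Y),Z) = (\nabla_X q)(Y,Z) + (\nabla_Y q)(X,Z) - (\nabla_Z q)(X,Y).
\]

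\emph{Stage 2: varying the $(1,3)$-curvature.} Differentiating the identity $R^{(s)}(X,Y)Z = \nabla^{(s)}_Y \nabla^{(s)}_X Z - \nabla^{(s)}_X \nabla^{(s)}_Y Z + \nabla^{(s)}_{[X,Y]} Z$ at $s=0$, and using the tensoriality of $A$ together with $[X,Y] = \nabla_X Y - \nabla_Y X$, all the $A$-terms involving $\nabla_V U$ cancel and one obtains
\[
\dot R(X,Y)Z := \left.\frac{d}{ds}\right|_{s=0} R^{(s)}(X,Y)Z = (\nabla_Y A)(X,Z) - (\nabla_X A)(Y,Z).
\]
To extract the stated formula I would work at a point $p$ and extend $X,Y,Z,W$ to be parallel at $p$, so that $(\nabla_V A)(U,Z)|_p = \nabla_V(A(U,Z))|_p$. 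Substituting the formula from Stage 1 then reduces $g(\dot R(X,Y)Z, W)|_p$ to the four iterated second covariant derivatives of $q$ that appear in the statement, together with a single leftover commutator $(\nabla_Y \nabla_X q - \nabla_X \nabla_Y q)(Z,W)$. The Ricci identity for a $(0,2)$-tensor, applied with the paper's sign convention, rewrites this commutator as $-q(R(X,Y)Z, W) - q(Z, R(X,Y)W)$, and dividing by $2$ yields the first displayed formula in the proposition.

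\emph{Stage 3: the $(0,4)$-curvature.} Differentiating $R^{(s)}(X,Y,Z,W) = g^{(s)}(R^{(s)}(X,Y)Z, W)$ at $s=0$ gives
\[
\left.\frac{d}{ds}\right|_{s=0} R^{(s)}(X,Y,Z,W) = g(\dot R(X,Y)Z, W) + q(R(X,Y)Z, W),
\]
so adding $q(R(X,Y)Z,W)$ to the $(1,3)$-formula flips the sign of that term from $-\tfrac{1}{2}$ to $+\tfrac{1}{2}$ while leaving $-\tfrac{1}{2}q(Z, R(X,Y)W)$ untouched, producing \eqref{(0,4)-curv var}. The main obstacle is purely bookkeeping: correctly tracking the many cancellations in the Koszul variation of Stage 1, ensuring that the $A$-terms built from $\nabla_V U$ cancel with those from $[X,Y]$ in Stage 2, and carefully applying the Ricci identity with the paper's nonstandard curvature sign convention. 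Once these are in hand, the proposition follows by matching indices against the $\nabla^2 q$ terms in the statement.
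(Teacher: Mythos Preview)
Your outline is correct and is precisely the standard derivation (Koszul variation of the connection, then $(\nabla_Y A)(X,Z)-(\nabla_X A)(Y,Z)$ for the $(1,3)$-curvature, then the extra $q(R(X,Y)Z,W)$ from varying the metric in the pairing). However, there is nothing to compare against: the paper does not supply its own proof of this proposition. It is stated with a direct citation to \cite[1.174 Theorem, page 62]{Bes07} and used as a black box, so your write-up goes strictly beyond what the paper does here.
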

\begin{remark}
Our notations are different from the notations in \cite{Bes07}. In particular, we use $\nabla q(\cdot,\cdot;X)$ instead of $\nabla_Xq(\cdot,\cdot)$ and $\nabla^2 q(\cdot,\cdot;Y;X)$ instead of $\nabla^2_{X,Y}q(\cdot,\cdot)$. (See \cite[1.17, page 25]{Bes07}.) One can check that our convention on Riemannian curvature tensors coincide with \cite[equation (1.21), page 26]{Bes07}.
\end{remark}

We want to apply the above proposition to a special case when $q(\cdot,\cdot)=\langle\cdot, V\rangle\langle\cdot, V\rangle$ for some smooth vector field $V$ on $M$. For any vector fields $X,Y,Z,W$ on $M$, the first and second covariant derivatives of $q$ are computed as follows. 
\eqn{
\nabla q(X,Y;Z)=&Zq(X,Y)-q(\nabla_ZX,Y)-q(X,\nabla_ZY) \\
=&Z(\langle X, V\rangle\langle Y , V\rangle)-\langle\nabla_ZX, V\rangle\langle Y, V\rangle-\langle X, V\rangle\langle\nabla_ZY, V\rangle  \\
=&\langle X, \nabla_ZV\rangle\langle Y, V\rangle+\langle X, V\rangle\langle Y, \nabla_ZV\rangle
}
and 
\eq{\label{nabla2q}
&\nabla^2 q(X,Y;Z;W)\\
=&W\nabla q(X,Y;Z)-\nabla q(\nabla_WX,Y;Z)-\nabla q(X,\nabla_WY;Z)-\nabla q(X,Y;\nabla_WZ) \\
=&W(\langle X, \nabla_ZV\rangle\langle Y, V\rangle+\langle X, V\rangle\langle Y, \nabla_ZV\rangle)\\
&-\langle\nabla_WX,\nabla_ZV\rangle\langle Y,V\rangle
-\langle\nabla_WX,V\rangle\langle Y,\nabla_ZV\rangle \\
&-\langle X,\nabla_ZV\rangle\langle\nabla_WY,V\rangle-\langle X,V\rangle\langle\nabla_WY,\nabla_ZV\rangle \\
&-\langle X,\nabla_{\nabla_WZ}V\rangle\langle Y,V\rangle-\langle X,V\rangle\langle Y,\nabla_{\nabla_WZ}V\rangle\\
=&\langle X,(\nabla_W\nabla_Z-\nabla_{\nabla_WZ})V\rangle\langle Y,V\rangle+\langle X,V\rangle\langle Y,(\nabla_W\nabla_Z-\nabla_{\nabla_WZ})V\rangle\\
&+\langle X,\nabla_ZV\rangle\langle Y,\nabla_WV\rangle+\langle X,\nabla_WV\rangle\langle Y,\nabla_ZV\rangle.
}
Equations \eqref{(0,4)-curv var} and \eqref{nabla2q} yield
\eq{\label{(0,4)-curv var special q}
&2\left.\frac{d}{ds}\right|_{s=0} R^{(s)}(X,Y,Z,W) \\
=&\nabla^2q(X,W;Z;Y)-\nabla^2q(X,Z;W;Y) 
-\nabla^2q(Y,W;Z;X)+\nabla^2q(Y,Z;W;X) \\
&+q(R(X,Y)Z,W)-q(Z,R(X,Y)W)  \\
=&\langle X,(\nabla_Y\nabla_Z-\nabla_{\nabla_YZ})V\rangle\langle W,V\rangle+\langle X,V\rangle\langle W,(\nabla_Y\nabla_Z-\nabla_{\nabla_YZ})V\rangle \\
&-\langle X,(\nabla_Y\nabla_W-\nabla_{\nabla_YW})V\rangle\langle Z,V\rangle-\langle X,V\rangle\langle Z,(\nabla_Y\nabla_W-\nabla_{\nabla_YW})V\rangle \\
&-\langle Y,(\nabla_X\nabla_Z-\nabla_{\nabla_XZ})V\rangle\langle W,V\rangle-\langle Y,V\rangle\langle W,(\nabla_X\nabla_Z-\nabla_{\nabla_XZ})V\rangle \\
&+\langle Y,(\nabla_X\nabla_W-\nabla_{\nabla_XW})V\rangle\langle Z,V\rangle+\langle Y,V\rangle\langle Z,(\nabla_X\nabla_W-\nabla_{\nabla_XW})V\rangle \\
&+\langle X,\nabla_ZV\rangle\langle W,\nabla_YV\rangle+\langle X,\nabla_YV\rangle\langle W,\nabla_ZV\rangle  \\
&-\langle X,\nabla_WV\rangle\langle Z,\nabla_YV\rangle-\langle X,\nabla_YV\rangle\langle Z,\nabla_WV\rangle  \\
&-\langle Y,\nabla_ZV\rangle\langle W,\nabla_XV\rangle-\langle Y,\nabla_XV\rangle\langle W,\nabla_ZV\rangle  \\
&+\langle Y,\nabla_WV\rangle\langle Z,\nabla_XV\rangle+\langle Y,\nabla_XV\rangle\langle Z,\nabla_WV\rangle  \\ 
&+\langle R(X,Y)Z,V\rangle\langle W,V\rangle+\langle Z,V\rangle\langle R(X,Y)W,V\rangle.
}
{(Here, the $j$-th and $(j+4)$-th line after the second equality sign corresponds to the $j$-th term after the first equality sign for any $1\leq j\leq 4$.)} In particular, if $Y=W$ and $X,Y,Z\perp V$ with respect to $g$, we have
\eq{\label{main ingredient for Pf var}
&2\left.\frac{d}{ds}\right|_{s=0}\langle R^{(s)}(X,Y)Z,Y\rangle_s \\
=&\langle X,\nabla_ZV\rangle\langle Y,\nabla_YV\rangle+\langle X,\nabla_YV\rangle\langle Y,\nabla_ZV\rangle  \\
&-\langle X,\nabla_YV\rangle\langle Z,\nabla_YV\rangle-\langle X,\nabla_YV\rangle\langle Z,\nabla_YV\rangle  \\
&-\langle Y,\nabla_ZV\rangle\langle Y,\nabla_XV\rangle-\langle Y,\nabla_XV\rangle\langle Y,\nabla_ZV\rangle  \\
&+\langle Y,\nabla_YV\rangle\langle Z,\nabla_XV\rangle+\langle Y,\nabla_XV\rangle\langle Z,\nabla_YV\rangle.  \\ 
}

\subsection{Curvature computation of Riemannian metrics of the form $g_1+f^2g_2$}
Let $(M_j,g_j)$ be Riemannian manifolds, $j=1,2$, and $f:M_1\times M_2\to\R_+$ be a smooth function. In this subsection only, we denote by $\langle\cdot,\cdot\rangle$, $\langle\cdot,\cdot\rangle_1$ and $\langle\cdot,\cdot\rangle_2$ the inner product induced by $g_1+f^2g_2$, $g_1$ and $g_2$ respectively.

For any vector fields $X_j, Y_j, Z_j$ on $M_j$, let $X=X_1+X_2$, $Y=Y_1+Y_2$, $Z=Z_1+Z_2$ be vector fields on $M$. (Here, if we denote by  $\pi_j:M\to M_j$ the natural projection maps for $j=1,2$, then the vector field $X_1:M_1\times M_2\to T(M_1\times M_2)$ on $M=M_1\times M_2$ is defined as $X_1(p_1,p_2)=(X_1(p_1),0)$. Similarly $X_2(p_1,p_2)=(0,X_2(p_2))$.) Let $\nabla$, $\nabla^{(1)}$, $\nabla^{(2)}$ be the Levi-Civita connections on $(M,g_1+f^2g_2)$, $(M_1,g_1)$ and $(M_2,g_2)$ respectively. Then {by \eqref{connection formula},} we have
\begin{align*}
&2\langle \nabla_XY,Z\rangle\\
=&X\langle Y,Z\rangle+Y\langle X,Z\rangle-Z\langle X,Y\rangle+\langle[X,Y],Z\rangle-\langle[X,Z],Y\rangle-\langle[Y,Z],X\rangle\\
=&X_1\langle Y_1,Z_1\rangle_1+Y_1\langle X_1,Z_1\rangle_1-Z_1\langle X_1,Y_1\rangle_1+\langle[X_1,Y_1],Z_1\rangle_1-\langle[X_1,Z_1],Y_1\rangle_1-\langle[Y_1,Z_1],X_1\rangle_1 \\
&+[X_1(f^2)+X_2(f^2)]\langle Y_2,Z_2\rangle_2+[Y_1(f^2)+Y_2(f^2)]\langle X_2,Z_2\rangle_2-[Z_1(f^2)+Z_2(f^2)]\langle X_2,Y_2\rangle_2 \\
&+f^2(X_2\langle Y_2,Z_2\rangle_2+Y_2\langle X_2,Z_2\rangle_2-Z_2\langle X_2,Y_2\rangle_2+\langle[X_2,Y_2],Z_2\rangle_2-\langle[X_2,Z_2],Y_2\rangle_2-\langle[Y_2,Z_2],X_2\rangle_2)\\
=&2\langle \nabla^{(1)}_{X_1}Y_1,Z_1\rangle_1+2f^2\langle \nabla^{(2)}_{X_2}Y_2,Z_2\rangle_2\\
&+2f (X_1f+X_2f)\langle Y_2,Z_2\rangle_2+2f (Y_1f+Y_2f)\langle X_2,Z_2\rangle_2-2f (Z_1f+Z_2f)\langle X_2,Y_2\rangle_2.
\end{align*}
In particular
\begin{align*}
\langle\nabla_{X_1}Y_1,Z_1\rangle=&\langle\nabla^{(1)}_{X_1}Y_1,Z_1\rangle_1=\langle\nabla^{(1)}_{X_1}Y_1,Z_1\rangle;\\
\langle\nabla_{X_1}Y_1,Z_2\rangle=&\langle\nabla_{X_2}Y_1,Z_1\rangle=\langle\nabla_{X_1}Y_2,Z_1\rangle=0
;\\
\langle\nabla_{X_2}Y_1,Z_2\rangle=&f (Y_1f)\langle X_2,Z_2\rangle_2= \frac{Y_1f}{f}\langle X_2,Z_2\rangle;\\
\langle\nabla_{X_1}Y_2,Z_2\rangle=&f (X_1f)\langle Y_2,Z_2\rangle_2=\frac{X_1f}{f}\langle Y_2,Z_2\rangle;\\
\langle\nabla_{X_2}Y_2,Z_1\rangle=&-f (Z_1f)\langle X_2,Y_2\rangle_2=-\frac{Z_1f}{f}\langle X_2,Y_2\rangle;\\
\langle\nabla_{X_2}Y_2,Z_2\rangle=&f^2\langle \nabla^{(2)}_{X_2}Y_2,Z_2\rangle_2+f (X_2f)\langle Y_2,Z_2\rangle_2+f (Y_2f)\langle X_2,Z_2\rangle_2-f( Z_2f)\langle X_2,Y_2\rangle_2\\
=&\langle \nabla^{(2)}_{X_2}Y_2,Z_2\rangle+ \frac{X_2f}{f}\langle Y_2,Z_2\rangle+\frac{ Y_2f}{f}\langle X_2,Z_2\rangle- \frac{Z_2f}{f}\langle X_2,Y_2\rangle.
\end{align*}
Hence $\nabla_{X_1}X_1\in TM_1\times\{0\}\subset TM$ and $\nabla_{X_1}Y_2\in\{0\}\times TM_2\subset TM$. Since $[X_1,Y_2]=0$, we have
\begin{align}\label{convexity}
\begin{split}
&R(X_1,Y_2,X_1,Y_2)\\
=&\langle\nabla_{Y_2}\nabla_{X_1}X_1-\nabla_{X_1}\nabla_{Y_2}X_1, Y_2\rangle\\
=&Y_2\langle\nabla_{X_1}X_1, Y_2\rangle-\langle\nabla_{X_1}X_1, \nabla_{Y_2}Y_2\rangle-\langle\nabla_{X_1}\nabla_{Y_2}X_1, Y_2\rangle\\
=&\frac{(\nabla_{X_1}X_1)f}{f}\langle Y_2,Y_2\rangle-\langle\nabla_{X_1}\nabla_{X_1}Y_2,Y_2\rangle\\
=&\frac{(\nabla_{X_1}X_1)f}{f}\langle Y_2,Y_2\rangle-X_1\langle\nabla_{X_1}Y_2,Y_2\rangle+\langle\nabla_{X_1}Y_2,\nabla_{X_1}Y_2\rangle\\
=&\frac{(\nabla_{X_1}X_1)f}{f}\langle Y_2,Y_2\rangle-X_1\left(f(X_1f)\langle Y_2,Y_2\rangle_2\right)+\frac{X_1f}{f}\langle \nabla_{X_1}Y_2,Y_2\rangle\\
=&\frac{(\nabla_{X_1}X_1)f}{f}\langle Y_2,Y_2\rangle-\left(\frac{f(X_1X_1f)+(X_1f)^2}{f^2}\right)\langle Y_2,Y_2\rangle+\left(\frac{X_1f}{f}\right)^2\langle Y_2,Y_2\rangle\\
=&\frac{(\nabla_{X_1}X_1)f}{f}\langle Y_2,Y_2\rangle-\frac{X_1X_1f}{f}\langle Y_2,Y_2\rangle=-\frac{\mathrm{Hess}_gf(X_1,X_1)}{f}\langle Y_2,Y_2\rangle.
\end{split}
\end{align}

\section{Codimension one, totally geodesic and flat local foliations near points with non-degenerate Ricci curvature}\label{s3}
Let $(M,g)$ be a non-positively curved, closed oriented $4$-manifold with $\chi(M)=0$. Recall that in Lemma \ref{prelim local structure}, we have for any point $p\in M$ with negative definite $\Ric$, there exist a unit vector field $V$ defined in a neighborhood $U_p$ of $p$ such that $2\Ric_g(V,V)=R_g$. In particular, $V$ satisfies the following two properties (as a direct corollary of Lemma \ref{prelim local structure}).
\begin{enumerate}
\item[(iv).] For any $q\in U_p$ and any non-zero vectors $v\perp w\in \{u\in T_qM:u\perp V\}$, the sectional curvature between $v$ and $w$ is zero;
\item[(v).] For any $q\in U_p$ and any non-zero vector $w\perp V$, the sectional curvature between $w$ and $V$ is negative.
\end{enumerate}
{To see why the above two properties holds, for any unit vectors $v,w\in T_qM$ such that $v,w\perp V$ and $v\perp w$, we choose an orthonormal basis $v_1=V|_q,v_2=w,v_3=v,v_4$ in $T_qM$. Then 
$$\sum_{2\leq i,j\leq 4}R(v_i,v_j,v_i,v_j)=R_g|_q-\Ric_g(v_1,v_1)-\sum_{i=2}^4R(v_i,v_1,v_i,v_1)=R_g|_q-2(\Ric_g)|_q(V,V)=0.$$
Since $(M,g)$ is nonpositively curved, $R(v_i,v_j,v_i,v_j)=0$ for any $2\leq i,j\leq 4$. In particular, $R(w,v,w,v)=R(v_2,v_3,v_2,v_3)=0$, which proves property (iv). Moreover, $\Ric_g(w,w)=R(v_1,w,v_1,w)=R(V|_q,w,V|_q,w)$. Property (v) then follows from the assumption that $\Ric$ is negative definite in $U_p\ni q$.
}

The following lemma proves a stronger version of property (iv).
\begin{lemma}\label{integrable+totally geo flats}
Under the above assumptions, the orthogonal complement $V^\perp$ is an integrable distribution. Moreover, its integral manifolds (defined locally) are totally geodesic and flat.
\end{lemma}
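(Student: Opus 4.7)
The plan is first to reduce the lemma to a single infinitesimal statement, and then to derive that statement from a first-variation argument. Set $B(X,Y):=\langle X,\nabla_Y V\rangle$ for $X,Y\in V^\perp$. Because $|V|\equiv 1$ on $U_p$, we have $\nabla_Y V\in V^\perp$, so $B$ is the shape operator of the codimension-one distribution $V^\perp$: its antisymmetric part is $\tfrac12 dV^\flat|_{V^\perp\times V^\perp}$, so $B$ symmetric is equivalent to integrability (Frobenius), and $B\equiv 0$ is equivalent to integrability together with the totally geodesic property. Moreover, polarizing property (iv) (using that on a non-positively curved manifold $R(v,w,v,w)=0$ for $v,w\perp V$ realizes the maximum of the sectional curvature, so the first derivatives of sectional curvature in tangential directions vanish) gives $R\equiv 0$ on $(V^\perp)^{\otimes 4}$ throughout $U_p$; by Gauss's equation any totally geodesic integral leaf then has zero intrinsic curvature. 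Thus the entire lemma reduces to proving $B\equiv 0$ on $V^\perp\times V^\perp$.

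To establish this I implement observation (2) of the introduction. Fix any smooth nonnegative bump $\phi$ supported in $U_p$ and form $g^{(s)}:=g+s\phi\cdot V^\flat\otimes V^\flat$. Topological invariance gives $\int_M\Pf_{g^{(s)}}\,dvol_{g^{(s)}}\equiv(2\pi)^2\chi(M)=0$, and since $\Pf_g\equiv 0$ pointwise on $M$ (from \eqref{Pf simplified} together with $\chi(M)=0$ and the non-positive curvature hypothesis) the volume-form variation drops out of the first derivative at $s=0$, leaving $\int_M \tfrac{d}{ds}|_{s=0}\Pf_{g^{(s)}}(x)\,dvol_g(x)=0$. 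I would compute the integrand pointwise at $x\in U_p$ in the special orthonormal basis $v_1=V|_x,v_2,v_3,v_4$ from Lemma \ref{prelim local structure}, where the only non-vanishing curvature components at $x$ are the three strictly negative sectional curvatures $R_{1212},R_{1313},R_{1414}$. A careful computation using $\Pf=(|R|^2-4|\Ric|^2+R_g^2)/8$ together with the intrinsic variation formula \eqref{(0,4)-curv var} and the expression for $\dot g^{ij}$ shows that all the metric-contraction contributions as well as all intrinsic variations $\dot R_{1i1i}$ cancel, leaving the remarkably simple expression
\[
\frac{d}{ds}\bigg|_{s=0}\Pf_{g^{(s)}}(x) = R_{1212}\,\dot R_{3434} + R_{1313}\,\dot R_{4242} + R_{1414}\,\dot R_{2323},
\]
where each $\dot R_{ijij}$ is the intrinsic variation from \eqref{(0,4)-curv var}. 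Formula \eqref{main ingredient for Pf var}, applied with $V$ replaced by $\phi^{1/2}V$ and $X,Y,Z\in\{v_2,v_3,v_4\}$, expresses each $\dot R_{ijij}$ as $\phi(x)$ times an explicit quadratic form in the matrix $B_{ab}:=\langle v_a,\nabla_{v_b}V\rangle$ (the second-derivative-of-$\phi$ contributions drop out thanks to $X,Y,Z\perp V$). Arbitrariness of $\phi$ then yields a pointwise quadratic identity $\mathcal Q(B|_x)=0$ on all of $U_p$.

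The main obstacle is the closing linear-algebra step: extracting $B\equiv 0$ from $\mathcal Q\equiv 0$. The single equation coming from the variation above has the schematic form
\[
R_{1212}\bigl(B_{33}B_{44}+B_{34}B_{43}-B_{34}^{2}-B_{43}^{2}\bigr)+R_{1313}\bigl(B_{22}B_{44}+B_{24}B_{42}-B_{24}^{2}-B_{42}^{2}\bigr)+R_{1414}\bigl(B_{22}B_{33}+B_{23}B_{32}-B_{23}^{2}-B_{32}^{2}\bigr)=0,
\]
which is plainly far from determining the nine unknowns $B_{ab}$. I therefore expect to repeat the variational construction with additional symmetric 2-tensors supported in $U_p$---for instance $q'=\phi\,W^\flat\otimes W^\flat$ for various $W\in V^\perp$, or mixed tensors $q''=\phi(V^\flat\otimes W^\flat+W^\flat\otimes V^\flat)$---producing further independent quadratic identities on the entries of $B$. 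Combined with the strict negativity of $R_{1212},R_{1313},R_{1414}$, these identities should be rigid enough to force every $B_{ab}$ to vanish, which by the first-paragraph reduction completes the proof.
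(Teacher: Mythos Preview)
Your reduction of the lemma to the vanishing of $B(X,Y):=\langle X,\nabla_YV\rangle$ on $V^\perp$ is correct, and your variational setup with $q=\phi\,V^\flat\otimes V^\flat$ matches the paper's Step~2. However, the closing step you flag as ``the main obstacle'' is a genuine gap, and your proposed remedy---generating further identities by varying along $\phi\,W^\flat\otimes W^\flat$ or mixed tensors---is speculative and not carried out. Indeed, your single identity is far from rigid: for example $B=\mathrm{diag}(1,0,0)$ satisfies it identically, so one equation cannot detect a nonzero symmetric part of $B$.

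The paper closes this gap by a completely different ingredient that you are missing: \emph{before} any variational argument, it uses the \emph{second Bianchi identity} to prove that $B$ is skew-symmetric on $V^\perp$. Concretely, for $X,Y\perp V$ one differentiates the identically-zero curvatures on $(V^\perp)^{\otimes 4}$ (your $R\equiv 0$ observation) and applies $\nabla R(X,Y,X,Y;V)+\nabla R(X,Y,Y,V;X)+\nabla R(X,Y,V,X;Y)=0$; since $R(\cdot,V,\cdot,V)$ is negative definite on $V^\perp$, a short linear-algebra argument forces the symmetric part of $B$ to vanish. This cuts the nine unknowns down to three.

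With $B$ skew-symmetric, the paper's variation (a single $q=\rho^2\,V^\flat\otimes V^\flat$) is already decisive. In an arbitrary orthonormal frame $V_1=V,V_2,V_3,V_4$ the variation of the Pfaffian organizes itself as
\[
\left.\frac{d}{ds}\right|_{s=0}\Pf_{g^{(s)}}(x)=3\rho(x)^2\left.\frac{d}{ds}\right|_{s=0}\det\!\bigl[\langle\nabla_\cdot V,\cdot\rangle|_{V^\perp}-s\,R(\cdot,V,\cdot,V)|_{V^\perp}\bigr],
\]
and for $L$ skew-symmetric $3\times 3$ and $N$ negative definite one has $\tfrac{d}{ds}|_{s=0}\det(L-sN)\geq 0$ with equality iff $L=0$. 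Hence the integrand has a sign, the integral vanishes, and $B\equiv 0$ follows. Note that in your special pointwise basis this sign is already visible in your own formula: with $B$ skew one gets $\dot R_{3434}=-3B_{34}^2$, $\dot R_{4242}=-3B_{24}^2$, $\dot R_{2323}=-3B_{23}^2$, so $\dot\Pf=-3(R_{1212}B_{34}^2+R_{1313}B_{24}^2+R_{1414}B_{23}^2)\geq 0$. The missing idea is therefore the Bianchi step, not additional variations.
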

\begin{proof}
It suffices to show that $\langle\nabla_\cdot V,\cdot\rangle$ vanish on $V^\perp$. {(This is because for any locally defined smooth vector fields $X,Y$ tangent to the distribution $V^\perp$ (equivalently, $X,Y$ are perpendicular to $V$ whenever they are defined), we have
$$\langle [X,Y],V\rangle=\langle\nabla_XY-\nabla_YX,V\rangle=X\langle Y,V\rangle-\langle Y,\nabla_XV\rangle-(Y\langle X,V\rangle-\langle X,\nabla_YV\rangle)=0.$$
Therefore $[X,Y]\perp V$ whenever they are defined. This proves that $V^\perp$ is integrable. Hence $\langle\nabla_\cdot V,\cdot\rangle|_{V^\perp}$ is the second fundamental form of the integral manifolds of $V^\perp$ up to a sign. $\langle\nabla_\cdot V,\cdot\rangle|_{V^\perp}\equiv 0$ is equivalent to the integral manifolds of $V^\perp$ being totally geodesic. Flatness of these integral manifolds follows from the above property (iv).)} We split the proof into two steps. 

\textbf{Step 1: $\langle\nabla_\cdot V,\cdot\rangle$ is skew symmetric on $V^\perp$.} This first step essentially follows from the Bianchi identities. For any $q\in U_p$ and any non-zero vectors $X,Y,Z\in T_qM$ perpendicular to $V$, we can extend them to vector fields also denoted by $X,Y,Z$ defined in an open neighborhood $O_q\subset U_p$ of $q$ such that $X,Y,Z\perp V$ on $V_q$. Then by property (iv) above, we have
\eqn{
R(X,Y,X,\cdot)=R(Y,X,Y,\cdot)\equiv 0\mathrm{~on~}O_q \quad (\mathrm{0~is~the~largest~sectional~curvature.})
}
and
\eq{\label{eq0}
R(X,Y,Z,\cdot)+R(Z,Y,X,\cdot)\equiv 0\mathrm{~on~}O_q. \quad \left(\begin{subarray}~\mathrm{Substitute}~X~\mathrm{by}~X+tZ~\\\mathrm{in~above~and~differentiate~at~}t=0.\end{subarray}\right)
}
Then the first Bianchi identity \eqref{1st Bianchi}, the skew symmetry in the first two entries of the Riemannian curvature tensor \eqref{curv sym} and the above \eqref{eq0} imply that
\eqn{
2R(Y,X,Z,\cdot)=&-R(X,Y,Z,\cdot)+R(Y,X,Z,\cdot)~\:\,\, \quad\quad\quad\quad\quad\quad(\mathrm{\eqref{curv sym},~skew~symmetry~in~}X,Y.)\\
=&R(Z,Y,X,\cdot)-R(Z,X,Y,\cdot)\quad\quad\quad\quad\quad\quad\quad\quad(\mathrm{\eqref{eq0}~applied~to~both~terms.})\\
=&R(Z,Y,X,\cdot)+R(X,Z,Y,\cdot)\quad\quad\quad\quad\quad\quad\quad\quad(\mathrm{\eqref{curv sym},~skew~symmetry~in~}X,Z.)\\
=&-R(Y,X,Z,\cdot)\implies R(Y,X,Z,\cdot)=0\mathrm{~on~}O_q.~ (\mathrm{\eqref{1st Bianchi},~the~first~Bianchi~identity.})
}
By arbitariness of $Z$ perpendicular to $V$ and \eqref{curv sym}, we have
\eq{\label{eq1}
R(X,Y,\cdot,\cdot)=R(\cdot,\cdot,X,Y)\equiv 0\mathrm{~on~}O_q.
} 
Hence by \eqref{eq1} and arbitariness of $X,Y$ perpendicular to $V$, we have
\eq{\label{eq2}
\nabla R(X,Y,X,Y;V)=VR(X,Y,X,Y)-2R(\nabla_VX,Y,X,Y)-2R(X,\nabla_VY,X,Y)=0,
} 
\eq{\label{eq3}
\nabla R(X,Y,Y,V;X)=&XR(X,Y,Y,V)-R(\nabla_XX,Y,Y,V)\\
&-R(X,\nabla_XY,Y,V)-R(X,Y,\nabla_XY,V)-R(X,Y,Y,\nabla_XV)\\
=&-R(V,Y,Y,V)\langle\nabla_XX,V\rangle-R(X,V,Y,V)\langle\nabla_XY,V\rangle\\
=&-R(V,Y,Y,V)(X\langle X,V\rangle-\langle X,\nabla_XV\rangle) \\
&-R(X,V,Y,V)(X\langle Y,V\rangle-\langle Y,\nabla_XV\rangle)\\
=&R(V,Y,Y,V)\langle X,\nabla_XV\rangle+R(X,V,Y,V)\langle Y,\nabla_XV\rangle
}
and similarly
\eq{\label{eq4}
\nabla R(X,Y,V,X;Y)=\nabla R(Y,X,X,V;Y)=R(V,X,X,V)\langle Y,\nabla_YV\rangle+R(Y,V,X,V)\langle X,\nabla_YV\rangle.
}
Apply the second Bianchi identity \eqref{2nd Bianchi} to \eqref{eq2}, \eqref{eq3} and \eqref{eq4}, we have
\eq{\label{eq after Bianchi}
0=&\nabla R(X,Y,X,Y;V)+\nabla R(X,Y,Y,V;X)+\nabla R(X,Y,V,X;Y)\\
=&R(V,Y,Y,V)\langle X,\nabla_XV\rangle+R(X,V,Y,V)\langle Y,\nabla_XV\rangle \\
&+R(V,X,X,V)\langle Y,\nabla_YV\rangle+R(Y,V,X,V)\langle X,\nabla_YV\rangle \\
=&R(X,V,Y,V)(\langle Y,\nabla_XV\rangle+\langle X,\nabla_YV\rangle)-R(Y,V,Y,V)\langle X,\nabla_XV\rangle-R(X,V,X,V)\langle Y,\nabla_YV\rangle.
}
Denoted by $V^\perp|_q:=\{u\in T_qM:u\perp V\}$. For simplicity, we write $A(u,w)=-R(u,V,w,V)$ and $B(u,w)=(\langle\nabla_uV,w\rangle+\langle\nabla_wV,u\rangle)/2$ for any $u,w\in V^\perp|_q$. Then by property (v), $A$ is positive definite and $B$ is symmetric on $V^\perp|_q$. For any $X,Y\in V^\perp|_q$, \eqref{eq after Bianchi} can be simplified as
\eq{\label{simple eq after Bianchi}
A(X,X)B(Y,Y)+A(Y,Y)B(X,X)-2A(X,Y)B(X,Y)=0.
}
Let $u_1,u_2,u_3$ be an orthonormal eigenbasis for $A$ on $V^\perp|_q$ (i.e. $u_i$ are unit eigenvectors of $A$ on $V^\perp|_q$ and $u_1,u_2,u_3$ are orthogonal to each other). For simplicity we write $A_{ij}=A(u_i,u_j)$ and $B_{ij}=B(u_i,u_j)$. Then $A_{ij}=0$ whenever $i\neq j$ and $A_{ii}>0$, following the fact that $A$ is positive definite. For any $\theta\in\R$, we let $X=u_1$ and $Y=u_2\cos\theta+u_3\sin\theta$. Then $A(X,Y)=0$ and \eqref{simple eq after Bianchi} yields
\eq{\label{eq theta}
A_{11}(B_{22}\cos^2\theta +B_{33}\sin^2\theta +2B_{23}\sin\theta\cos\theta )+(A_{22}\cos^2\theta +A_{33}\sin^2\theta )B_{11}=0.
}
Subtracting $\eqref{eq theta}|_{\theta=0}\cdot\cos^2\theta+\eqref{eq theta}|_{\theta=\pi/2}\cdot\sin^2\theta$ from \eqref{eq theta}, we have
$$2A_{11}B_{23}\sin\theta\cos\theta=0.$$
Since $A_{11}>0$, $B_{23}=0$. Similarly, $B_{12}=B_{23}=0$, which implies that $A$ and $B$ are simultaneously diagonalizable. Without loss of generality, we assume that $B_{11}B_{22}\geq 0$. Then when $\theta=0$, \eqref{eq theta} implies that
$$0=(A_{11}B_{22}+A_{22}B_{11})^2=A_{11}^2B_{22}^2+A_{22}^2B_{11}^2+2A_{11}A_{22}B_{11}B_{22}.$$
Since $A_{11},A_{22}>0$, we have $B_{11}=B_{22}=0.$ Hence by \eqref{eq theta}, $B_{33}=0$. This finishes the proof of step 1 since $B\equiv0$ is equivalent to $\langle\nabla_\cdot V,\cdot\rangle$ being skew symmetric on $V^\perp$.

\textbf{Step 2: $\langle\nabla_\cdot V,\cdot\rangle$ vanish on $V^\perp$.}
This last step essentially follows from variation of the Gauss-Bonnet-Chern formula along a certain smooth path of Riemannian metrics. For any $q\in U_p$, let $O_q\subset U_p$ be an open neighborhood satisfying the following properties:
\begin{enumerate}
\item[(a).] There exists vector fields $V=V_1,V_2,V_3,V_4$ on $O_q$ such that they form an orthonormal basis with respect to the metric $g$ in every fiber of $TO_q$;
\item[(b).] There exists a bump function $\rho:M\to \R$ properly supported on $O_q$. That is to say, $\rho$ is a smooth function satisfying the following properties.
\begin{enumerate}
\item[(i).] $0\leq \rho(x)\leq 1$ for any $x\in M$;
\item[(ii).] There exists open neighborhoods $O_q'\subset \overline{O_q'}\subset O_q''\subset \overline{O_q''}\subset O_q$ of $q$ such that $\rho(x)=1$ for any $x\in O_q'$ and $\rho(x)=0$ for any $x\in O_q\setminus \overline{O_q''}$.
\end{enumerate}
\end{enumerate}
Let $q(\cdot,\cdot)|_x=\langle\cdot,\rho(x)V\rangle\langle\cdot,\rho(x)V\rangle$ for any $x\in M$. Then $\rho(x)V$ is a smooth vector field defined on $M$ and $q$ is a smooth symmetric 2-tensor on $M$. Let $g^{(s)}=g+sq$ for $|s|\ll 1$. Then $V_1/(1+s\rho^2)^{1/2},V_2,V_3,V_4$ form an orthonormal basis with respect to the metric $g^{(s)}$ in every fiber of $TO_q$. Denoted by $R$, $R^{(s)}$ the Riemannian curvature tensor of $g$, $g^{(s)}$ and $R_{ijkl}=R(V_i,V_j,V_k,V_l),R^{(s)}_{ijkl}=R^{(s)}(V_i,V_j,V_k,V_l),\forall~1\leq i,j,k,l\leq 4$, respectively. Hence by \eqref{Pf general} we have $\Pf_{g^{(s)}}(x)=\Pf_g(x)$ for any $x\in M\setminus O_q$ and
\eq{\label{Pf_s}
&\Pf_{g^{(s)}}(x)\left[1+s(\rho(x))^2\right] \\
=&R^{(s)}_{1212}R^{(s)}_{3434}+R^{(s)}_{1313}R^{(s)}_{4242}+R^{(s)}_{1414}R^{(s)}_{2323}+\left(R^{(s)}_{1234}\right)^2+\left(R^{(s)}_{1342}\right)^2+\left(R^{(s)}_{1423}\right)^2 \\
&-2R^{(s)}_{1213}R^{(s)}_{4243}-2R^{(s)}_{1214}R^{(s)}_{3234}-2R^{(s)}_{1314}R^{(s)}_{2324}-2R^{(s)}_{2123}R^{(s)}_{4143}-2R^{(s)}_{2124}R^{(s)}_{3134}-2R^{(s)}_{3132}R^{(s)}_{4142}
}
for any $x\in O_q$. When $s=0$, by \eqref{eq1} and the assumption that $V_1=V,V_2,V_3,V_4$ are orthonormal with respect to $g=g^{(0)}$ in every fiber, we have $R_{ijkl}=0$ whenever $1\not\in\{i,j\}\cap\{k,l\}$. In particular.
$0=R_{3434}=R_{4242}=R_{2323}=R_{1234}=R_{1342}=R_{1423}=R_{4243}=R_{3234}=R_{2324}=R_{2123}=R_{4143}=R_{2124}=R_{3134}=R_{3132}=R_{4142}$ in $O_q$. Therefore doing the first variation of \eqref{Pf_s} when $s=0$ yields
\eq{\label{var of Pf}
\left.\frac{d}{ds}\right|_{s=0}\Pf_{g^{(s)}}(x)=&R_{1212}\left.\frac{d}{ds}\right|_{s=0}R^{(s)}_{3434}+R_{1313}\left.\frac{d}{ds}\right|_{s=0}R^{(s)}_{4242}+R_{1414}\left.\frac{d}{ds}\right|_{s=0}R^{(s)}_{2323}\\
&-2R_{1213}\left.\frac{d}{ds}\right|_{s=0}R^{(s)}_{4243}-2R_{1214}\left.\frac{d}{ds}\right|_{s=0}R^{(s)}_{3234}-2R_{1314}\left.\frac{d}{ds}\right|_{s=0}R^{(s)}_{2324}.
}
For simplicity, we denote by $Q_{ij}:=\langle V_i,\nabla_{V_j}V\rangle$ in $O_q$ for any $2\leq i,j,\leq 4$. Then by Step 1 of the proof, we have $Q_{ij}=-Q_{ji}$. Moreover, for any $x\in O_q$ and any $X,Y\in T_xM$ perpendicular to $V$, we have
\eq{\label{eq with rho}
\langle X,\nabla_{Y}(\rho(x)V)\rangle=\rho(x)\langle X,\nabla_{Y}V\rangle+Y\rho(x)\langle X,V\rangle=\rho(x)\langle X,\nabla_{Y}V\rangle.
}
In particular, $\langle \cdot,\nabla_{\cdot}(\rho(x)V)\rangle$ is also skew-symmetric on $V^\perp$ by Step 1. By \eqref{eq with rho} and \eqref{main ingredient for Pf var}, for any $x\in O_q$ and any $X,Y,Z\in O_q$ perpendicular to $V$, we have
\eq{\label{simplified curv var}
&\left.\frac{d}{ds}\right|_{s=0}\langle R^{(s)}(X,Y)Z,Y\rangle_s \left(=\left.\frac{d}{ds}\right|_{s=0}\langle R^{(s)}(Y,X)Y,Z\rangle_s\right)\\
=&\frac{1}{2}\left(\langle X,\nabla_Z(\rho(x)V)\rangle\langle Y,\nabla_Y(\rho(x)V)\rangle+\langle X,\nabla_Y(\rho(x)V)\rangle\langle Y,\nabla_Z(\rho(x)V)\rangle\right.  \\
&\quad-\langle X,\nabla_Y(\rho(x)V)\rangle\langle Z,\nabla_Y(\rho(x)V)\rangle-\langle X,\nabla_Y(\rho(x)V)\rangle\langle Z,\nabla_Y(\rho(x)V)\rangle  \\
&\quad-\langle Y,\nabla_Z(\rho(x)V)\rangle\langle Y,\nabla_X(\rho(x)V)\rangle-\langle Y,\nabla_X(\rho(x)V)\rangle\langle Y,\nabla_Z(\rho(x)V)\rangle  \\
&\quad\left.+\langle Y,\nabla_Y(\rho(x)V)\rangle\langle Z,\nabla_X(\rho(x)V)\rangle+\langle Y,\nabla_X(\rho(x)V)\rangle\langle Z,\nabla_Y(\rho(x)V)\rangle\right)\quad \\ 
=&3\langle X,\nabla_Y(\rho(x)V)\rangle\langle Y,\nabla_Z(\rho(x)V)\rangle \\
=&3(\rho(x))^2\langle X,\nabla_YV\rangle\langle Y,\nabla_ZV\rangle(=3(\rho(x))^2\langle Y,\nabla_XV\rangle\langle Z,\nabla_YV\rangle),
}
where the second equality follows from the skew-symmetry of $\langle \cdot,\nabla_{\cdot}(\rho(x)V)\rangle$ on $V^\perp$. Thus for any $x\in O_q$, by \eqref{simplified curv var} we can compute the first derivatives in the right hand side of \eqref{var of Pf} as follows.
\eq{\label{3434 and 4242}
\left.\frac{d}{ds}\right|_{s=0}R^{(s)}_{3434}=3(\rho(x))^2Q_{34}Q_{43},\quad\left.\frac{d}{ds}\right|_{s=0}R^{(s)}_{4242}=3(\rho(x))^2Q_{42}Q_{24},
}

\eq{\label{2323 and 4243}
\left.\frac{d}{ds}\right|_{s=0}R^{(s)}_{2323}=3(\rho(x))^2Q_{23}Q_{32},\quad\left.\frac{d}{ds}\right|_{s=0}R^{(s)}_{4243}=3(\rho(x))^2Q_{42}Q_{34},
}

\eq{\label{3234 and 2324}
\left.\frac{d}{ds}\right|_{s=0}R^{(s)}_{3234}=3(\rho(x))^2Q_{32}Q_{43}~\mathrm{and}~
\left.\frac{d}{ds}\right|_{s=0}R^{(s)}_{2324}=3(\rho(x))^2Q_{23}Q_{42}.
}
Apply \eqref{3434 and 4242}, \eqref{2323 and 4243}, \eqref{3234 and 2324} and the skew-symmetry of $(Q_{ij})_{2\leq i,j\leq 4}$ to \eqref{var of Pf}, we obtain
\eq{\label{var of Pf as var of det}
\left.\frac{d}{ds}\right|_{s=0}\Pf_{g^{(s)}}(x)=&3(\rho(x))^2(R_{1212}Q_{34}Q_{43}+R_{1313}Q_{42}Q_{24}+R_{1414}Q_{23}Q_{32}\\
&\quad\quad\quad\quad-2R_{1213}Q_{42}Q_{34}-2R_{1214}Q_{32}Q_{43}-2R_{1314}Q_{23}Q_{42})\\
=&{-3(\rho(x))^2\left.\frac{d}{ds}\right|_{s=0}\det\matiii{sR_{1212}&Q_{23}+sR_{1213}&Q_{24}+R_{1214}}{Q_{32}+sR_{1312}&sR_{1313}&Q_{34}+sR_{1314}}{Q_{42}+sR_{1412}&Q_{43}+sR_{1413}&sR_{1414}}}\\
=&-3(\rho(x))^2\left.\frac{d}{ds}\right|_{s=0}\det\left[(Q_{(i+1)(j+1)})_{1\leq i,j\leq 3}+s(R_{1(i+1)1(j+1)})_{1\leq i,j\leq3}\right]\\
=&3(\rho(x))^2\left.\frac{d}{ds}\right|_{s=0}\det\left[\langle\nabla_{\cdot}V,\cdot\rangle|_{V^\perp}-sR(\cdot,V,\cdot,V)|_{V^{\perp}}\right].
}
Denoted by $\dvol_g,\dvol_{g^{(s)}}$ the volume form with respect to metrics $g,g^{(s)}$ respectively. Then $\dvol_{g^{(s)}}=\left[1+s(\rho(x))^2\right]^{1/2}\dvol_g$. By the Gauss-Bonnet-Chern formula \eqref{GBC formula} and the above first variation of Pfaffian function \eqref{var of Pf as var of det}, we have
\eq{\label{var of GBC}
0=\left.\frac{d}{ds}\right|_{s=0}(2\pi)^2\chi(M)=&\left.\frac{d}{ds}\right|_{s=0}\int_M\Pf_{g^{(s)}}(x)\dvol_{g^{(s)}}(x)\\
=&\int_M\left.\frac{d}{ds}\right|_{s=0}\Pf_{g^{(s)}}(x)\dvol_{g}(x)\quad\quad(\Pf_g=\Pf_{g^{(0)}}\equiv 0.)\\
=&~3\int_{O_q}(\rho(x))^2\left.\frac{d}{ds}\right|_{s=0}\det\left[\langle\nabla_{\cdot}V,\cdot\rangle|_{V^\perp}-sR(\cdot,V,\cdot,V)|_{V^{\perp}}\right]\dvol_{g}(x).
}
We will show that the integrand in \eqref{var of GBC} is non-negative via the following linear algebra fact.
\begin{fact*}
Let $L,N$ be $3\times 3$ matrices with real entries. Suppose that $L$ is skew-symmetric and $N$ is negative definite. Then we have
$$\left.\frac{d}{ds}\right|_{s=0}\det(L-sN)\geq0.$$
Equality holds if and only of $L=0$.
\end{fact*}
\begin{proof}[Proof of Fact.]
Without loss of generality, we can assume that $N$ is diagonal with negative entries on its diagonal. Since $L$ is skew-symmetric, $L_{ij}=-L_{ji}$ for any $1\leq i,j\leq 3$. Hence
$$\left.\frac{d}{ds}\right|_{s=0}\det(L-sN)=N_{11}L_{23}L_{32}+N_{22}L_{13}L_{31}+N_{33}L_{12}L_{21}\geq 0.$$
Since $N_{11},N_{22},N_{33}<0$, equality holds if and only if $L_{23}=L_{12}=L_{13}=0$, which is equivalent to $L=0$. 
\end{proof}
Back to the proof of Lemma \ref{integrable+totally geo flats}, we apply the fact to the case when $L$ is given by the matrix of $\langle\nabla_\cdot V,\cdot\rangle$ and $N$ is given by the matrix of $R(\cdot,V,\cdot,V)$ and hence obtain
$$0=3\int_{O_q}(\rho(x))^2\left.\frac{d}{ds}\right|_{s=0}\det\left[\langle\nabla_{\cdot}V,\cdot\rangle|_{V^\perp}-sR(\cdot,V,\cdot,V)|_{V^{\perp}}\right]\dvol_{g}(x)\geq 0.$$
Equality holds if and only if $\langle\nabla_\cdot V,\cdot\rangle\equiv0$ in $\mathrm{supp}\rho$. In particular $\langle\nabla_\cdot V,\cdot\rangle=0$ at $q$. By arbitariness of $q\in U_p$, we have $\langle\nabla_\cdot V,\cdot\rangle\equiv 0$ in $U_p$, which proves Lemma \ref{integrable+totally geo flats}.
\end{proof}

\section{Proof of Theorem \ref{main thm}}\label{s4}
If $M$ is not orientable, we can pass to its orientable $2$-fold cover which also has vanishing Euler characteristic. Since Theorem \ref{main thm} is local, 
 the conclusion of Theorem \ref{main thm} for $M$ follows directly from the orientable case. Therefore we assume that $M$ is orientable.

First, we prove that when $\chi(M)=0$, the Ricci curvature must be degenerate somewhere. If not, then the Ricci curvature is negative definite everywhere on $M$. By Lemma \ref{prelim local structure}, there exist a smooth real line bundle $E\subset TM$ such that for any $p\in M$ and $v\in E^1_p:=E\cap T_p^1M$, $\mathrm{Ric}_g(v,v)=R_g/2$. In particular, for any $w,u\in T_pM\setminus\{0\}$ which are perpendicular to $v$, we have 
$$R(w,u,w,u)=0\mathrm{~and~}R(v,u,v,u),R(v,w,v,w)<0.$$
Since $E^1_p$ consists of 2 points, for any connected component $ \widehat E^1$ of $E\cap T^1M$, the natural projection map $\widehat E^1\to M$ is covering map with degree at most 2. Let us equip $\widehat E^1$ with the pullback metric $g_E$ of $g$ via the natural projection map $\widehat E^1\to M$. Then the smooth vector field $V$ on $\widehat E^1$ defined by 
$$V|_{(p,v)}=v\in T_{(p,v)}\widehat E^1\simeq T_pM,~\forall (p,v)\in \widehat E^1$$
is a unit vector field on $(\widehat E^1,g_E)$ such that for any $x\in \widehat E^1$ and any $u,w\in T_x\widehat E^1\setminus\{0\}$ which are perpendicular to $V|_x$, we have 
$$R(w,u,w,u)=0\mathrm{~and~}R(V|_x,u,V|_x,u),R(V|_x,w,V|_x,w)<0.$$
In particular, $\Ric_g(V,V)=R_g/2$. Since $(\widehat E^1,g_E)$ is a Riemannian covering of $M$ with $0=\chi(\widehat E^1)=\chi(M)$ or $2\chi(M)$, in order to prove that there exist some point with degenerate Ricci curvature, we can assume WLOG that $(M,g)=(\widehat E^1,g_E)$.

By Lemma \ref{integrable+totally geo flats} and the fact that $V$ is a unit vector field, we have 
\eq{\label{4.1}\nabla_XV=0,~\forall X\perp V~\mathrm{and}~\nabla_VV\perp V.}

We now consider the vector field $\nabla_VV$, for any point $p\in M$, we choose vector fields $X_1:=V, X_2,X_3,X_4$ such that they form an orthonormal frame when restricted to a neighborhood $U_p$ of $p$. Then on $U_p$, we have
\begin{align*}
\mathrm{div}(\nabla_VV)=&\langle\nabla_V\nabla_VV,V\rangle+\sum_{j=2}^4\langle\nabla_{X_j}\nabla_VV,X_j\rangle\\
=&V\langle\nabla_VV,V\rangle-|\nabla_VV|^2_g+\sum_{j=2}^4\left(R(V,X_j,V,X_j)+\langle\nabla_V\nabla_{X_j}V,X_j\rangle-\langle\nabla_{[V,X_j]}V,X_j\rangle\right)\\
=&-|\nabla_VV|^2_g+\Ric_g(V,V)-\sum_{j=2}^4\langle\nabla_{\nabla_VX_j-\nabla_{X_j}V}V,X_j\rangle\quad\quad\,\,\,\, \,\,\,\,(\mathrm{By}~\eqref{4.1}.)\\
=&-|\nabla_VV|^2_g+\Ric_g(V,V)-\sum_{j=2}^4\langle\nabla_{\nabla_VX_j}V,X_j\rangle\quad\quad\,\,\,\,\,\,\,\,\,\,\,\,\,\,\,\, \,\,\,\,\,\,\,\,(\mathrm{By}~\eqref{4.1}.)\\
=&-|\nabla_VV|^2_g+\Ric_g(V,V)-\sum_{j=2}^4\langle\nabla_{\langle\nabla_VX_j,V\rangle V+(\nabla_VX_j-\langle\nabla_VX_j,V\rangle V)}V,X_j\rangle\\
=&-|\nabla_VV|^2_g+\Ric_g(V,V)-\sum_{j=2}^4\langle\nabla_VX_j,V\rangle\langle\nabla_{V}V,X_j\rangle\,\,\,\,\quad\quad (\mathrm{By}~\eqref{4.1}.)\\
=&-|\nabla_VV|^2_g+\Ric_g(V,V)-\sum_{j=2}^4(V\langle X_j,V\rangle-\langle X_j,\nabla_VV\rangle)\langle\nabla_{V}V,X_j\rangle\\
=&-|\nabla_VV|^2_g+\Ric_g(V,V)+\sum_{j=2}^4\langle\nabla_{V}V,X_j\rangle^2\\
=&-|\nabla_VV|^2_g+\Ric_g(V,V)+|\nabla_VV|^2_g=\Ric_g(V,V)=\frac{R_g}{2}\quad(\mathrm{By}~\eqref{4.1}.)
\end{align*}
By the divergence theorem
$$0=\int_M\mathrm{div}(\nabla_VV)\dvol_g=\frac{1}{2}\int_MR_g\dvol_g.$$
This implies that $R_g\equiv 0$ and hence $M$ is flat, contradictory to the assumption that $\Ric_g$ is non-degenerate everywhere. Hence $\Ric_g$ must be degenerate somewhere.

To prove the local structure result stated in Theorem \ref{main thm}, we first notice that for any $p\in M$ such that $\Ric_g(p)$ is negative definite, by Lemma \ref{integrable+totally geo flats}, there exist some neighborhood $U_p$ and a unit vector field $V$ on $U_p$ such that $V^\perp$ is an integrable distribution whose integral manifolds are totally geodesic and flat. Hence there exist $\psi:U\to U_p$, a smooth diffeomorphism from a connected neighborhood $U$ of $0$ in $\R^4$ onto its image, such that for any $c\in\R$, $\psi(\cdot,\cdot,\cdot,c)$ is an integral manifold of $V^\perp$. Choose a convex open neighborhood $U_p'$ of $p$ which is properly contained in $\psi(U)$, i.e. $\overline{U_p'}\subset\mathrm{int}(\psi(U))$. 
For any $q=\psi(x_1,x_2,x_3,x_4)\in \psi(U)$, we denote by $F_{x_4}:=\{(y_1,y_2,y_3,x_4)|(y_1,y_2,y_3,x_4)\in U\}$ and $\cF_q:=\psi(F_{x_4})$. By the definition of $\psi$, $\cF_q$ is a local totally geodesic and flat leaf containing $q$ tangent to $V^\perp$. Then there exist some $\epsilon>0$ satisfying the following properties:
\begin{itemize}
\item For any $q\in U_p'$, the integral curve $\phi_t(q)$ of $V$ at $q$ is well-defined for $t\in[-\epsilon,\epsilon]$;
\item $\bigcup_{|t|\leq\epsilon}\phi_t(U_p')$ is properly contained in $\psi(U)$, i.e. $\overline{\bigcup_{|t|\leq\epsilon}\phi_t(U_p')}\subset\mathrm{int}(\psi(U))$;
\item The map $(x_1,x_2,x_3,x_4)\to\phi_{x_4}(\psi(x_1,x_2,x_3,0))$ is a diffeomorphism from $(F_0\cap \psi^{-1}(U_p'))\times(-\epsilon,\epsilon)$ onto its image.
\end{itemize}
For any $q\in \psi(U)$, we write $((\psi^{-1})_*V)|_{\psi^{-1}(q)}=(h_1(q),...,h_4(q))\in\R^4\cong T_{\psi^{-1}(q)}\R^4$, where $h_j(\cdot):\psi(U)\to\R$ are smooth functions. Since $(\psi^{-1})_*(V^\perp|_q)=\{(v_1,...,v_4)|v_4=0\}$, we have $h_4(q)$ are strictly positive or negative for any $q\in\psi(U)$. Let $\widehat V=V/h_4$. Then there exist some $0<\widehat\epsilon\leq\epsilon/\left(1+\sup_{|t|\leq\epsilon,q\in \overline{\bigcup_{|t|\leq\epsilon}\phi_t(U_p')}}|h_4(\phi_t(q))|\right)$ satisfying the following properties:  
\begin{itemize}
\item For any $q\in U_p'$, the integral curve $\widehat\phi_t(q)$ of $\widehat V$ at $q$ is well-defined for any $t\in[-\widehat\epsilon,\widehat\epsilon]$;
\item $\bigcup_{|t|\leq\widehat\epsilon}\widehat{\phi}_t(U_p')\subset\bigcup_{|t|\leq\epsilon}\phi_t(U_p')$;
\item The map $(x_1,x_2,x_3,x_4)\to\phi_{x_4}(\widehat\psi(x_1,x_2,x_3,0))$ is a diffeomorphism from $(F_0\cap \psi^{-1}(U_p'))\times(-\widehat\epsilon,\widehat\epsilon)$ onto its image.
\end{itemize}
Let 
$$U_{p,\delta}':=\bigcup_{|t|\leq\delta}\widehat{\phi}_t(U_p'),~\forall\, 0\leq \delta\leq \widehat\epsilon.$$ Then by the definition of $\psi$ and $\widehat V$, for any $q_1,q_2\in U_{p,\widehat\epsilon/3}$ and any $t\in[-\widehat\epsilon/3,\widehat\epsilon/3]$, $\cF_{q_1}=\cF_{q_2}$ if and only if $\cF_{\widehat\phi_t(q_1)}=\cF_{\widehat\phi_t(q_2)}$. In particular, for any $t\in(-\widehat\epsilon/3,\widehat\epsilon/3)$ and any smooth curve $\gamma:[0,1]\to\cF_p\cap U_p'=\psi(F_0\cap\psi^{-1}(U_p'))$, we have $\widehat\phi_t(\gamma([0,1]))\subset \cF_{\widehat\phi_t(p)}$ and hence
\begin{align*}
\frac{d}{dt}\mathrm{length}_g(\widehat\phi_t(\gamma))=&\int_0^1\frac{d}{dt}\|(\widehat\phi_t)_*\dot\gamma(s)\|_gds\\
=&\int_0^1\frac{\langle\nabla_{\widehat V}(\widehat\phi_t)_*\dot\gamma(s),(\widehat\phi_t)_*\dot\gamma(s)\rangle}{\|(\widehat\phi_t)_*\dot\gamma(s)\|_g}ds\\
=&\int_0^1\frac{\langle\nabla_{(\widehat\phi_t)_*\dot\gamma(s)}\widehat V,(\widehat\phi_t)_*\dot\gamma(s)\rangle}{\|(\widehat\phi_t)_*\dot\gamma(s)\|_g}ds\quad\quad\left(\begin{subarray}
~ \mathrm{Differentiation~along~}t\mathrm{~and~}s\\
\mathrm{commutes~for~}\widehat\phi_t(\gamma(s)).
\end{subarray}\right)\\
=&\int_0^1\frac{\langle\nabla_{(\widehat\phi_t)_*\dot\gamma(s)} V,(\widehat\phi_t)_*\dot\gamma(s)\rangle}{h_4(\widehat\phi_t(\gamma(s)))\|(\widehat\phi_t)_*\dot\gamma(s)\|_g}ds\quad\left(\begin{subarray}
~ V\perp(\phi_t)_*\dot\gamma(s)\mathrm{~since}\\
\widehat\phi_t(\gamma([0,1]))\subset \cF_{\widehat\phi_t(p)}.
\end{subarray}\right)\\
=&\,0.\quad\quad\quad\quad\quad\quad\quad\quad\quad\quad\quad\quad\quad\quad \left(\begin{subarray}
~ \mathrm{By~Lemma~\ref{integrable+totally geo flats}}\mathrm{and~}V\perp(\widehat\phi_t)_*\dot\gamma(s),\\
\mathrm{the~integrand~of~the~above~is~zero.}
\end{subarray}\right)
\end{align*}
As a corollary, $\widehat\phi_t|_{\cF_p\cap U_p'}:(\cF_p\cap U_p',g|_{\cF_p\cap U_p'})\to(\cF_{\widehat\phi_t(p)}\cap\widehat\phi_t(U_p'),g|_{\cF_{\widehat\phi_t(p)}\cap\widehat\phi_t(U_p')})$ is an isometry between convex open subsets of 3-dimensional Euclidean spaces (embedded into $M$ as local totally geodesic and flat leaves). 

Let $v_1,v_2,v_3,v_4=V|_p$ be an orthonormal basis for $T_pM$. Consider the map $\Psi:\R^3\to M$ such that $\Psi(x_1,x_2,x_3)=\exp_p(x_1v_1+x_2v_2+x_3v_3)$. We then define the following local coordinate map near $p$:
$$\widehat\Psi:\Psi^{-1}(\cF_p\cap U_p')\times(-\widehat\epsilon/3,\widehat\epsilon/3)\to M,~\widehat\Psi(x_1,x_2,x_3,x_4):=\widehat\phi_{x_4}(\Psi(x_1,x_2,x_3)).$$
Then $\widehat\Psi$ is a smooth diffeomorphism onto its image. Moreover, 
$$\widehat\Psi^{*}g=dx_1^2+dx_2^2+dx_3^2+(f(x_1,x_2,x_3,x_4))^2dx_4^2$$ 
for some positive smooth function $f(x_1,x_2,x_3,x_4)$. (This is because $\widehat\Psi_*(\del/\del x_j)$ are perpendicular to each other for any $1\leq j\leq 4$ and $\widehat\Psi_*(\del/\del x_j)$ are unit vectors for any $1\leq j\leq 3$.) Strict convexity of $f(\cdot,\cdot,\cdot,x_4)$ follows directly from \eqref{convexity} and the fact $R(w,V,w,V)<0$ for any $0\neq w\in V^\perp$.

\section{Simplicial volume and Euler characteristics}\label{s5}
Similar to the Euler characteristics, another topological invariant which interplays nicely with the geometry of a manifold, especially for nonpositively curved manifolds, is the \emph{simplicial volume}. Let $M$ be a closed, connected, oriented topological $n$-manifold. The simplicial volume of $M$ is defined as
\[||M||:=\inf\left\{\sum_{i=1}^\ell |a_i|:\left[\sum_{i=1}^\ell a_i \sigma_i\right]=[M]\right\}\]
where the infimum is taken over all real singular cycles $\sum_{i=1}^\ell a_i \sigma_i$ representing the fundamental class $[M]\in H_n(M;\mathbb R)$. The following two conjectures are due to Gromov \cite{Sav} and \cite[p. 232]{Gromov}.

\begin{conjecture}\label{conj:Ricci}
	If $M$ admits a Riemannian metric with nonpositive sectional curvature and negative definite Ricci curvature, then $||M||>0$.
\end{conjecture}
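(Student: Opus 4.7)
The plan is to reduce Conjecture \ref{conj:Ricci} in dimension four to a companion conjecture of Gromov stating that any closed nonpositively curved manifold with nonzero Euler characteristic has positive simplicial volume, with the reduction supplied by Theorem \ref{main thm}. I would not attempt the general-dimensional case by this method, since in dimensions $\ge 6$ Geroch's example destroys the pointwise sign of the Pfaffian and already the first step below fails.

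Concretely, suppose $M^4$ is closed, oriented, nonpositively curved, with $\Ric$ negative definite at every point. Then alternative (a) of Theorem \ref{main thm} holds nowhere, so the opening assertion of Theorem \ref{main thm} gives $\chi(M) \ne 0$. The Hopf-Chern observation in dimension four, namely that $\Pf_g \ge 0$ pointwise under nonpositive curvature by \eqref{Pf simplified} (combined with \eqref{GBC formula}), upgrades this to $\chi(M) > 0$. Feeding this into Gromov's conjecture that nonpositive curvature plus $\chi(M) \ne 0$ forces $\|M\| > 0$ yields the desired conclusion $\|M\| > 0$, establishing Conjecture \ref{conj:Ricci} in dimension four conditional on that second conjecture. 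A parallel argument also works if one replaces ``negative definite everywhere'' by ``nowhere does the local flat factor of Theorem \ref{main thm}(b) appear'', since Theorem \ref{main thm} again forces $\chi(M) \ne 0$.

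The main obstacle is that the conclusion is only \emph{conditional}: the reduction replaces one open conjecture by another. The Pfaffian/curvature analysis developed in Sections \ref{s2}--\ref{s4} does not by itself produce any bounded cohomology or efficient fundamental cycle, so there is no visible route from Ricci negativity to $\|M\| > 0$ without some separate topological input tying $\chi(M)$ to $\|M\|$. Beyond dimension four the obstruction is more fundamental still, since Theorem \ref{main thm} itself does not extend (the Pfaffian is no longer pointwise sign-definite), and thus even the preliminary step of extracting $\chi(M)\ne 0$ from the Ricci hypothesis is unavailable through the present strategy.
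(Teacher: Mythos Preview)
The statement in question is a \emph{conjecture}, not a theorem, and the paper does not claim to prove it. What the paper does establish (in the Corollary of Section~\ref{s5}) is precisely the conditional implication you have identified: in dimension four, Conjecture~\ref{conj:Euler} implies Conjecture~\ref{conj:Ricci}, via Theorem~\ref{main thm}. Your argument---Ricci negative definite everywhere forces $\chi(M)\neq 0$ by the contrapositive of the first assertion of Theorem~\ref{main thm}, and then the contrapositive of Conjecture~\ref{conj:Euler} gives $\|M\|>0$---is exactly the paper's argument run in the direct rather than contrapositive direction. Your explicit acknowledgment that this is only a conditional reduction, not a proof, is correct and matches the paper's stance; the conjecture remains open.
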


\begin{conjecture}\label{conj:Euler}
	If $M$ is aspherical and $||M||=0$, then $\chi(M)=0$.
\end{conjecture}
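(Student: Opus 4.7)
The natural approach is via $L^2$-Betti numbers. Since $M$ is closed, oriented and aspherical, Atiyah's $L^2$-index theorem gives
$$\chi(M)=\sum_{i=0}^{n}(-1)^i b_i^{(2)}(\widetilde M,\pi_1(M)),$$
so the task reduces to making the right-hand side vanish. The cleanest sufficient condition is Gromov's stronger conjectural enhancement asserting that $\|M\|=0$ forces $b_i^{(2)}(\widetilde M)=0$ for every $i$; even if one only targets the alternating sum, this is the natural framework within which any attack must operate.

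My plan proceeds in three steps. First, I would try to extract from $\|M\|=0$ some amenability structure on the $\pi_1(M)$-action on $\widetilde M$. Gromov's Mapping Theorem identifies $\|M\|$ with an $\ell^1$-semi-norm on bounded cohomology that vanishes whenever the classifying map factors through the classifying space of an amenable group; the subtask is to convert this into either an infinite amenable normal subgroup of $\pi_1(M)$ or a nontrivial amenable factor under orbit/measure equivalence. If successful, the Cheeger--Gromov vanishing theorem (for infinite amenable normal subgroups) together with Gaboriau's measure-equivalence invariance of $L^2$-Betti numbers closes the argument. Second, in parallel, I would pursue a geometric route: attempt to build from an efficient fundamental cycle an open cover of $M$ by amenable subsets of multiplicity at most $n$, and then apply the vanishing results of L\"oh--Sauer passing from such covers directly to constraints on $\chi(M)$. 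Third, as a fallback, I would enumerate the known structural consequences of $\|M\|=0$ for aspherical $M$ (polynomial growth of $\pi_1$, Cheeger--Gromov $F$-structures, virtual fiber-bundle structures over positive-dimensional amenable bases) and dispose of them case by case.

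The main obstruction is the lack of any functorial bridge between the vanishing of the top-degree bounded-cohomology semi-norm and either of the algebraic amenability conditions strong enough to vanish $L^2$-Betti numbers. Every known positive instance (mapping tori, graph manifolds, manifolds with $F$-structures) supplies the amenability through external geometric input, not from $\|M\|=0$ itself; in particular one cannot a priori exclude that $M$ carries genuine middle-dimensional $L^2$-homology that happens to be invisible to bounded cochains. I therefore expect the decisive step to be the very first one: promoting bounded-cohomology vanishing to an algebraic or measured-group-theoretic amenability statement. A more realistic interim target, directly suggested by the present paper, is the nonpositively curved case: when $\widetilde M$ splits off a Euclidean factor as in Corollary \ref{main cor}, $\chi(M)=0$ follows from a K\"unneth argument on $L^2$-Betti numbers, so removing the analyticity hypothesis in Theorem \ref{main thm} would settle the conjecture for nonpositively curved $4$-manifolds conditionally on the $L^2$ vanishing produced by a flat de Rham factor.
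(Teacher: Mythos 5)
There is a genuine gap here, and it is unavoidable: the statement you were asked to prove is Gromov's conjecture, which the paper itself records as Conjecture~\ref{conj:Euler} and does \emph{not} prove. The paper's only contribution concerning it is the conditional result in Section~\ref{s5}, namely that for nonpositively curved $4$-manifolds it is sandwiched between Conjecture~\ref{conj:Ricci-strong} and Conjecture~\ref{conj:Ricci}; this uses Lemma~\ref{lem:zeroRic_implies_zeroPf} and Theorem~\ref{main thm}, not any proof of the conjecture itself. Your proposal likewise does not prove the statement: after the (correct) observation that Atiyah's $L^2$-index theorem gives $\chi(M)=\sum_i(-1)^i b_i^{(2)}(\widetilde M)$ for a closed aspherical $M$, every subsequent step is conditional on inputs that are themselves open. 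The reduction to ``$\|M\|=0$ implies $b_i^{(2)}=0$ for all $i$'' is exactly Gromov's stronger open conjecture, so it replaces the problem by a harder one; the amenability routes (an infinite amenable normal subgroup for Cheeger--Gromov vanishing, or an amenable open cover of small multiplicity for the L\"oh--Sauer machinery) require structure that vanishing of the simplicial volume is not known to provide, as you yourself acknowledge in the paragraph on obstructions. A plan that names its own missing bridge is a research program, not a proof, so the decisive step --- converting $\|M\|=0$ into any algebraic, measured, or covering-theoretic amenability statement strong enough to kill the $L^2$-Betti numbers (or even just their alternating sum) --- remains entirely absent.

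Two smaller points. First, your closing remark is the one place where your sketch genuinely touches the paper: Theorem~\ref{main thm} and Corollary~\ref{main cor} go in the direction $\chi(M)=0\Rightarrow$ Euclidean de Rham factor (under analyticity), whereas Conjecture~\ref{conj:Euler} needs the implication $\|M\|=0\Rightarrow\chi(M)=0$; the paper extracts from its theorem only the implication Conjecture~\ref{conj:Euler}$\Rightarrow$Conjecture~\ref{conj:Ricci} for nonpositively curved $4$-manifolds, plus the equivalence of Conjecture~\ref{conj:Euler} and Conjecture~\ref{conj:dim4} in the analytic case. Removing analyticity in Theorem~\ref{main thm}, as you suggest, would still not prove Conjecture~\ref{conj:Euler} even for nonpositively curved $4$-manifolds without an independent argument that $\|M\|=0$ forces $\chi(M)=0$ (rather than the converse). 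Second, the aspherical hypothesis is not needed for the $L^2$ Euler characteristic identity itself; where it matters is in the conjectural vanishing statements you invoke, so the logical weight of your first step is lighter than the write-up suggests.
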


We refer to \cite{LoehEtAl22, LoehRaptis23} for a vivid discussion on some of the progress and possible approaches regarding Conjecture \ref{conj:Euler}. Partial results on Conjecture \ref{conj:Ricci} include the case of locally symmetric spaces of noncompact type \cite{LS06, Bu} and rank one manifolds with a stronger Ricci condition \cite{CW19}. It is also observed \cite{CW20} that using a local straightening technique, the Ricci condition in the latter case need only be satisfied at one point, thus a stronger conjecture was proposed as follows \cite[Conjecture 4.1]{CW20},

\begin{conjecture}\label{conj:Ricci-strong}
	If $M$ admits a Riemannian metric with nonpositive sectional curvature everywhere and negative definite Ricci curvature at a point, then $||M||>0$.
\end{conjecture}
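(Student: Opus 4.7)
The plan is to show, in dimension four, that the Main Theorem combined with Conjecture \ref{conj:Euler} implies Conjecture \ref{conj:Ricci-strong}, at least in the real-analytic category. I argue by contradiction: suppose $M$ is a closed real-analytic $4$-manifold with nonpositive sectional curvature, with $\Ric|_p$ negative definite at some point $p$, and with $\|M\|=0$. Since $M$ is nonpositively curved and closed, $M$ is aspherical by Cartan--Hadamard, so Conjecture \ref{conj:Euler} forces $\chi(M)=0$. Corollary \ref{main cor} then applies and guarantees that the universal cover $\widetilde M$ has a nontrivial Euclidean de Rham factor, say $\widetilde M = \R^k \times N$ with $k \geq 1$. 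Because the Ricci tensor of a Riemannian product is the direct sum of the Ricci tensors of the factors, $\Ric_{\widetilde M}$ has a null subspace of dimension at least $k$ at every point; descending to $M$, the Ricci of $M$ is degenerate everywhere. This contradicts $\Ric|_p < 0$ and closes the argument.

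Observe that the Main Theorem alone already yields the weaker Conjecture \ref{conj:Ricci} modulo Conjecture \ref{conj:Euler} in dimension four: if $\Ric$ is negative definite everywhere, then the contrapositive of the first statement of the Main Theorem gives $\chi(M) \neq 0$, and the contrapositive of Conjecture \ref{conj:Euler} (applied to the aspherical $M$) then gives $\|M\| > 0$. The passage from this weaker form to Conjecture \ref{conj:Ricci-strong} is precisely what the local structure in case (b) of the Main Theorem, combined with Corollary \ref{main cor}, provides: even a single point of negative-definite Ricci is enough to rule out the codimension-one flat-foliation structure globally, via the analytic rigidity of Schroeder \cite{Sch89} and the rank rigidity theorem.

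The principal obstacle to a fully unconditional proof, and even to removing the analyticity assumption, is that Corollary \ref{main cor} rests on Schroeder's classification of maximal higher-rank submanifolds, which genuinely requires real analyticity. In the $C^{\infty}$ category the leaves of the codimension-one foliation supplied by Lemma \ref{integrable+totally geo flats} on the open set $\{\Ric < 0\}$ are only locally flat, and there is no a priori guarantee that they extend, close up, or accumulate in a way that produces a global Euclidean de Rham factor. A direct smooth attack would therefore require either a new rank rigidity result without analyticity, or a method that bypasses the rank formalism entirely; for instance, one could try to exploit the explicit warped form $g_f = dx_1^2+dx_2^2+dx_3^2+f^2 dx_4^2$ of Main Theorem (b), together with the strict convexity of $f$ in $(x_1,x_2,x_3)$, to extract a global obstruction incompatible with $\|M\|=0$ directly from the CW20 local straightening machinery referenced just before Conjecture \ref{conj:Ricci-strong}. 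This globalization across the Ricci-degenerate locus is where I expect the genuinely hard work to lie.
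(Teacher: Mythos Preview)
The statement is Conjecture \ref{conj:Ricci-strong}, which the paper presents as an \emph{open conjecture}; the paper does not prove it, so there is no proof to compare against. What the paper does establish (the Corollary in Section \ref{s5}) is the chain Conjecture \ref{conj:Ricci-strong} $\implies$ Conjecture \ref{conj:Euler} $\implies$ Conjecture \ref{conj:Ricci} for nonpositively curved $4$-manifolds.

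Your proposal does not prove the conjecture either, and you are explicit about this: what you actually argue is a conditional, restricted result, namely that in dimension four with a real-analytic metric, Conjecture \ref{conj:Euler} implies Conjecture \ref{conj:Ricci-strong}. That argument is correct: $\|M\|=0$ and asphericity together with Conjecture \ref{conj:Euler} give $\chi(M)=0$; Corollary \ref{main cor} then forces a Euclidean de Rham factor in $\widetilde M$, so $\Ric$ is degenerate at every point, contradicting the hypothesis at $p$. This is the \emph{reverse} direction to the first implication in the paper's Corollary, so together they say that in the analytic four-dimensional setting Conjectures \ref{conj:Euler} and \ref{conj:Ricci-strong} are equivalent---a mild sharpening the paper does not state. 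Your second paragraph (Conjecture \ref{conj:Euler} $\implies$ Conjecture \ref{conj:Ricci} in dimension four) reproduces exactly the paper's own proof of that implication. Your discussion of the obstacles (dependence on the unproven Conjecture \ref{conj:Euler}; reliance on analyticity through Schroeder's classification behind Corollary \ref{main cor}) is accurate.

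To be clear, though: neither you nor the paper has proved Conjecture \ref{conj:Ricci-strong} as stated. Your contribution is a correct conditional implication in a restricted setting, not a proof of the conjecture.
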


It is clear that Conjecture \ref{conj:Ricci-strong} implies Conjecture \ref{conj:Ricci}. Our Theorem \ref{main thm} further shows that when restricting to nonpositively curved $4$-manifolds, Conjecture \ref{conj:Euler} in facts sits in between Conjecture \ref{conj:Ricci-strong} and Conjecture \ref{conj:Ricci}.

\begin{corollary}
	Let $M$ be a nonpositively curved $4$-manifolds. Then
	\[\textrm{Conjecture } \ref{conj:Ricci-strong}\implies \textrm{Conjecture } \ref{conj:Euler}\implies \textrm{Conjecture } \ref{conj:Ricci}.\]
\end{corollary}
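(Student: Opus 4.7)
The plan is to prove each of the two implications separately, and in each case by contraposition, reducing to the Main Theorem and Lemma \ref{lem:zeroRic_implies_zeroPf}. The point is that in the nonpositively curved $4$-dimensional setting these two results already forge the precise links between degeneracy of $\Ric$ and vanishing of $\chi(M)$ that are needed to sandwich Conjecture \ref{conj:Euler} between the other two.

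For Conjecture \ref{conj:Ricci-strong} $\Rightarrow$ Conjecture \ref{conj:Euler} within the class of nonpositively curved $4$-manifolds, I would take a closed aspherical nonpositively curved $4$-manifold $M$ with $||M|| = 0$ and aim to show $\chi(M) = 0$. On any nonpositively curved manifold, the Ricci tensor at each point is negative semidefinite and therefore either negative definite or degenerate. The contrapositive of the assumed Conjecture \ref{conj:Ricci-strong} rules out the negative-definite option at every point of $M$, so $\Ric$ must be degenerate on all of $M$. Lemma \ref{lem:zeroRic_implies_zeroPf} then yields $\Pf_g \equiv 0$, and the Gauss--Bonnet--Chern formula \eqref{GBC formula} immediately gives $\chi(M) = 0$.

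For Conjecture \ref{conj:Euler} $\Rightarrow$ Conjecture \ref{conj:Ricci} in the same setting, I would take a closed nonpositively curved $4$-manifold $M$ with $\Ric$ negative definite everywhere and suppose for contradiction that $||M|| = 0$. The Cartan--Hadamard theorem makes $M$ aspherical, so the assumed Conjecture \ref{conj:Euler} forces $\chi(M) = 0$. But then Theorem \ref{main thm} produces a point of $M$ at which $\Ric$ is degenerate, contradicting the standing hypothesis. Hence $||M|| > 0$.

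Both arguments are essentially direct unpackings of the machinery developed in Sections \ref{s2}--\ref{s4}, so there is no real technical obstacle; if anything, the only subtlety worth flagging is the observation that on a nonpositively curved manifold ``not negative definite'' is the same as ``degenerate,'' which is what lets the contrapositive of Conjecture \ref{conj:Ricci-strong} deliver everywhere-degenerate $\Ric$ rather than merely non-definite $\Ric$.
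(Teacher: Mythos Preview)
Your proof is correct and follows essentially the same route as the paper's: both implications are handled by contraposition, invoking Lemma~\ref{lem:zeroRic_implies_zeroPf} and Gauss--Bonnet--Chern for the first and Theorem~\ref{main thm} for the second. Your version is in fact slightly more explicit---you spell out that nonpositive curvature forces ``not negative definite'' to mean ``degenerate,'' and you invoke Cartan--Hadamard to justify asphericity before applying Conjecture~\ref{conj:Euler}---but the logical skeleton is identical.
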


\begin{proof}
``Conjecture \ref{conj:Ricci-strong} $\implies$ Conjecture \ref{conj:Euler}'': We prove the contrapositive statement. Suppose $\chi(M)>0$, the Gauss-Bonnet-Chern's theorem implies $\Pf(p)>0$ for some $p\in M$, then Lemma \ref{lem:zeroRic_implies_zeroPf} shows that $\Ric$ is nondegenerate at $p$, thus by Conjecture \ref{conj:Ricci-strong}, we have $||M||>0$. This proves Conjecture \ref{conj:Euler}.

``Conjecture \ref{conj:Euler}$\implies$ Conjecture \ref{conj:Ricci}'': We prove the contrapositive statement. Suppose $||M||=0$, then by Conjecture \ref{conj:Euler} we have $\chi(M)=0$. Thus by Theorem \ref{main thm}, Ricci curvature must be degenerate somewhere. This proves Conjecture \ref{conj:Ricci}.
\end{proof}

It is natural to ask whether the converse of Conjecture \ref{conj:Euler} holds for nonpositively curved even dimensional manifolds, that is, does $\chi(M)=0$ imply $||M||=0$. This is obviously true for surfaces ($n=2$), but in high dimensions ($n\geq 6$) it is false. For example, one can take $M$ to be the product of two closed hyperbolic $3$-manifolds, then $\chi(M)=0$ but $||M||>0$. It remains mysterious in dimension $4$. In view of Conjecture \ref{conj:Euler}, we propose the following stronger conjecture.

\begin{conjecture}\label{conj:dim4}
	Let $M$ be a closed nonpositively curved $4$-manifold. Then $||M||=0$ if and only if $\chi(M)=0$.
\end{conjecture}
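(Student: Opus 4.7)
I would split the biconditional and handle the two directions by quite different techniques, since only one of them is within reach of the tools developed in this paper.

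\emph{Direction 1:} $||M||=0\Rightarrow\chi(M)=0$. By Cartan-Hadamard, every closed nonpositively curved manifold is aspherical, so this implication is exactly Gromov's Conjecture~\ref{conj:Euler} restricted to our $4$-dimensional nonpositively curved setting. The plan would be to leverage the Gauss-Bonnet-Chern formula in conjunction with the non-negativity of $\Pf_g$ under nonpositive curvature. Since $\chi(M)=\tfrac{1}{32\pi^2}\int_M\Pf_g\,\dvol_g\geq 0$, it suffices to force $\int_M\Pf_g\,\dvol_g$ to vanish whenever $||M||=0$. One route is to combine Gromov's main inequality between volume and simplicial volume (applied after a suitable normalization of the curvature) with the pointwise bound of $\Pf_g$ by curvature invariants. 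A second route exploits rank rigidity in dimension four: results of Ballmann and Burns-Spatzier, together with the positivity of simplicial volume for rank-one locally symmetric spaces, suggest that $||M||=0$ should force a local splitting of a finite cover of $M$, and a K\"unneth argument would then give $\chi(M)=0$.

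\emph{Direction 2:} $\chi(M)=0\Rightarrow||M||=0$. This is where Theorem~\ref{main thm} enters. In the real-analytic case the matter essentially reduces to known machinery: by Corollary~\ref{main cor}, $\widetilde M$ splits isometrically as $\R\times N$, so after passing to a finite cover one obtains a compatible $S^1$-action (or at least a polarized $F$-structure in the sense of Cheeger-Gromov), at which point Gromov's vanishing theorem for simplicial volume in the presence of such structures yields $||M||=0$. So the analytic subcase should go through without substantial new input.

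\emph{The main obstacle} is the smooth, non-analytic setting, where Corollary~\ref{main cor} is unavailable. Theorem~\ref{main thm} still provides, near each point with negative-definite Ricci, a local unit vector field $V$ whose orthogonal distribution integrates to totally geodesic flat $3$-leaves. The strategy I would attempt is to assemble these local line fields $\R V$ into a global line distribution on the open set $U\subset M$ where $\Ric$ is nondegenerate, using the divergence computation from Section~\ref{s4} and Lemma~\ref{integrable+totally geo flats} to control how adjacent local pieces glue. One would then try to extend the resulting flat $3$-foliation across the closed degenerate locus $M\setminus U$ to obtain a (possibly singular) $F$-structure on $M$, after which Gromov's theorem gives $||M||=0$. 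The serious difficulty is precisely this boundary behavior: without real analyticity there is no unique continuation, the line field $V$ may fail to extend continuously across $M\setminus U$, and the flat leaves could accumulate pathologically. Taming this behavior — perhaps via a delicate PDE analysis of the vector field $V$ near its singular locus combined with the identity $\mathrm{div}(\nabla_VV)=\Ric_g(V,V)$ from Section~\ref{s4} — appears to be the essential barrier to proving Conjecture~\ref{conj:dim4} in the $C^\infty$ category.
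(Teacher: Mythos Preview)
The statement you are addressing is labeled as a \emph{Conjecture} in the paper, and the paper does not prove it. There is no ``paper's own proof'' to compare against: the authors propose Conjecture~\ref{conj:dim4} as an open problem and then remark that, in the real-analytic subcategory, Corollary~\ref{main cor} yields the implication $\chi(M)=0\Rightarrow||M||=0$ (since an $\R$-factor forces $||M||=0$), so that in that subcategory Conjecture~\ref{conj:dim4} becomes equivalent to Conjecture~\ref{conj:Euler}. That is the full extent of what the paper establishes about this statement.

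Your write-up is not a proof but a research outline, and to your credit you largely present it as such. Your treatment of Direction~2 in the analytic case matches the paper's remark. However, your proposals for Direction~1 are not arguments that currently work: the ``Gromov main inequality'' route gives a bound of the form $||M||\leq c_n\Vol(M)$ under a lower sectional bound, which goes the wrong way for deducing $\chi(M)=0$ from $||M||=0$, and there is no known pointwise comparison that would let you dominate $\int_M\Pf_g$ by a multiple of $||M||$. The ``rank rigidity'' route is likewise speculative: $||M||=0$ is not known to force higher rank or a splitting in this generality, so the K\"unneth step has no foothold. These are exactly why Conjecture~\ref{conj:Euler} remains open even for nonpositively curved $4$-manifolds.

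For Direction~2 in the smooth category you correctly identify the obstacle, and your description of the difficulty (extending the line field $\R V$ and the flat foliation across the Ricci-degenerate locus) is accurate. But note that the paper does not attempt this either; it simply leaves the smooth case of Conjecture~\ref{conj:dim4} open. In short: there is no gap to diagnose because there is no claimed proof on either side---your document and the paper agree that the statement is conjectural, and the only piece actually established is the analytic-case implication $\chi(M)=0\Rightarrow||M||=0$.
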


If we further restrict to real analytic nonpositively curved $4$-manifolds, then Corollary \ref{main cor} shows that $\chi(M)=0$ if and only if $\widetilde M$ has a nontrivial $\R$-factor, in which case we know $||M||=0$. Therefore the converse of Conjecture \ref{conj:Euler} holds thus Conjecture \ref{conj:dim4} and Conjecture \ref{conj:Euler} are equivalent.

\def\cprime{$'$}
\providecommand{\bysame}{\leavevmode\hbox to3em{\hrulefill}\thinspace}
\providecommand{\MR}{\relax\ifhmode\unskip\space\fi MR }
\providecommand{\MRhref}[2]{%
	\href{http://www.ams.org/mathscinet-getitem?mr=#1}{#2}
}
\providecommand{\href}[2]{#2}

\end{document}